\documentclass[12pt,a4paper,reqno]{amsart}

\usepackage[english]{babel}
\usepackage{amssymb,latexsym,amsfonts,amsthm,upref,amsmath}
\usepackage[foot]{amsaddr} 
\usepackage[margin=1in]{geometry} 
\usepackage[dvipsnames,x11names]{xcolor}
 \definecolor{myblue}{HTML}{003399}
\usepackage{hyperref}
\hypersetup{colorlinks,citecolor=myblue,filecolor=black,linkcolor=myblue,urlcolor=myblue}
\usepackage{enumerate}  
\usepackage{tikz}
\usepackage{float}
\usepackage{cleveref}
\usepackage{mathtools}
\usepackage{cite}
\usepackage{filecontents}
\makeatletter
\newcommand{\leqnomode}{\tagsleft@true}
\newcommand{\reqnomode}{\tagsleft@false}

\makeatother
\newtheorem*{thm*}{Theorem}
\newtheorem*{lem*}{Lemma}
\newtheoremstyle{prim}{}{}{\normalfont}{}{\bfseries}{.}{ }{}
\newtheoremstyle{stil}{}{}{\slshape}{}{\bfseries}{.}{ }{}
\theoremstyle{stil}
\newtheorem{thm}{Theorem}[section]
\newtheoremstyle{defi}{}{}{}{}{\bfseries}{.}{ }{}
\theoremstyle{defi}
\newtheorem{defn}[thm]{Definition}
\theoremstyle{defi}
\newtheorem{rem}[thm]{Remark}
\theoremstyle{stil}
\newtheorem*{mthm*}{Main Theorem}
\newtheorem*{kor*}{Corollary}
\newtheorem{pro}[thm]{Proposition}
\theoremstyle{stil}
\newtheorem{lem}[thm]{Lemma}
\theoremstyle{stil}
\newtheorem{kor}[thm]{Corollary}
\theoremstyle{prim}

\newenvironment{prf}{\noindent \textit{Proof.}}{\null\hfill$\qed$\hskip
2mm\vskip 2mm}

\newcommand{\dy}{ DY_c(\mathfrak{gl}_{N})}

\newcommand{\Vc}{\mathcal{V}^{c} (\mathfrak{gl}_N)}
\newcommand{\Vn}{\mathcal{V}^{crit} (\mathfrak{gl}_N)}
 
\newcommand{\vac}{ \mathrm{\boldsymbol{1}}}

\newcommand{\gl}{\mathfrak{gl}}
\newcommand{\sll}{\mathfrak{sl}}

\newcommand{\CC}{\mathbb{C}}

\newcommand{\ZZ}{\mathbb{Z}}

\newcommand{\Sc}{\mathcal{S}}

\newcommand{\Ec}{\mathcal{E}}

\newcommand{\wht}{\widehat}
\newcommand{\wvr}{\overline}

\newcommand{\ot}{\otimes}
\newcommand{\ts}{\hspace{1pt}}

\newcommand{\tr}{ {\rm tr}}

\newcommand{\ndo}{\mathop{\mathrm{End}}}
\newcommand{\om}{\mathop{\mathrm{Hom}}}

\newcommand{\rez}{\mathop{\mathrm{Res}}}

\newcommand{\diag}{\mathop{\mathrm{diag}}}
\newcommand{\cdotrl}{\mathop{\hspace{-2pt}\underset{\text{RL}}{\cdot}\hspace{-2pt}}}
\newcommand{\cdotlr}{\mathop{\hspace{-2pt}\underset{\text{LR}}{\cdot}\hspace{-2pt}}}

\newcommand{\fand}{\quad\text{and}\quad}
\newcommand{\Fand}{\qquad\text{and}\qquad}

\newcommand{\non}{\nonumber}
\newcommand{\beq}{\begin{equation}}
\newcommand{\eeq}{\end{equation}}
\newcommand{\ben}{\begin{equation*}}
\newcommand{\een}{\end{equation*}}

\newcommand{\R}{\wvr{R}}

\makeatletter
\def\smalloverbrace#1{\mathop{\vbox{\m@th\ialign{##\crcr\noalign{\kern3\p@}%
  \tiny\downbracefill\crcr\noalign{\kern3\p@\nointerlineskip}%
  $\hfil\displaystyle{#1}\hfil$\crcr}}}\limits}
\makeatother

\makeatletter
\def\smallunderbrace#1{\mathop{\vtop{\m@th\ialign{##\crcr
   $\hfil\displaystyle{#1}\hfil$\crcr
   \noalign{\kern3\p@\nointerlineskip}%
   \tiny\upbracefill\crcr\noalign{\kern3\p@}}}}\limits}
\makeatother

\makeatletter
\def\author@andify{%
  \nxandlist {\unskip ,\penalty-1 \space\ignorespaces}%
    {\unskip {} \@@and~}%
    {\unskip \penalty-2 \space \@@and~}%
}
\makeatother

\begin{document}

\title{Evaluation-type deformed modules over the quantum affine vertex algebras of type $A$}

\author{Lucia Bagnoli}
\address[L. Bagnoli]{Dipartimento di Matematica, Sapienza Universit\`{a} di Roma, P.le Aldo Moro 2, 00185 Rome, Italy \& INFN sezione di Roma}
\email{lucia.bagnoli@uniroma1.it}
 
\author{Slaven Ko\v{z}i\'{c}}
\address[S. Ko\v{z}i\'{c}]{Department of Mathematics, Faculty of Science, University of Zagreb,  Bijeni\v{c}ka cesta 30, 10000 Zagreb, Croatia}
\email{slaven.kozic@math.hr}

\begin{abstract}
Let $\mathcal{V}^c(\mathfrak{gl}_N)$ be   Etingof--Kazhdan's quantum affine vertex algebra associated with the trigonometric $R$-matrix. We establish a connection between suitably generalized deformed $\phi$-coordinated $\mathcal{V}^c(\mathfrak{gl}_N)$-modules and the representations of    quantized enveloping algebra $U_h(\mathfrak{gl}_N)$ and reflection equation algebra  $\mathcal{O}_h(Mat_N)$.
As an application, we demonstrate how the elements of the center of $\mathcal{V}^c(\mathfrak{gl}_N)$ at the critical level $c=-N$ give rise to the $q$-analogues of   quantum immanants for   $U_h(\mathfrak{gl}_N)$, which were recently found by Jing, Liu and Molev.
Finally, we derive the analogues of these results for  the  quantum affine vertex algebra associated with the   normalized Yang $R$-matrix.
\end{abstract}

\makeatletter
\@namedef{subjclassname@2020}{%
  \textup{2020} Mathematics Subject Classification}
\makeatother

\subjclass[2020]{17B37 (primary); 17B69, 81R50 (secondary)}  
\keywords{quantum vertex algebra, quantized enveloping algebra, reflection equation algebra, double Yangian, center at the critical level}

\maketitle
\allowdisplaybreaks
 
\section{Introduction}\label{intro}
\setcounter{equation}{0}
\numberwithin{equation}{section}
Let $U_h(\mathfrak{gl}_N)$ be the {\em quantized enveloping algebra}   over the commutative ring $\CC[[h]] $ of formal power series in parameter $h$. Its presentation can be given using the $R$-matrix 
\beq\label{R}
R=
e^{h/2}\sum_{i}  e_{ii}\ot e_{ii}
+\sum_{i\neq j} e_{ii}\ot e_{jj}
+\left(e^{h/2}-e^{-h/2}\right)\sum_{i<j}e_{ij}\ot e_{ji} ,
\eeq
where $e_{ij}\in\ndo\CC^N$ are the matrix units.
The algebra $U_h(\mathfrak{gl}_N)$ is  generated by the elements
$l_{ij}^+$ and $l_{ji}^-$, where $1\leqslant i\leqslant j\leqslant N$,
  subject to the defining relations
\begin{gather}
l_{ii}^+ \ts l_{ii}^- =  l_{ii}^-\ts l_{ii}^+\quad\text{for }i=1,\ldots ,N,\non\\
R\ts L_1^\pm\ts L_2^\pm = L_2^\pm\ts L_1^\pm\ts R\Fand 
R\ts L_1^+\ts L_2^- = L_2^-\ts L_1^+\ts R.\label{rlls}
\end{gather}
Note that the last two families of  relations 
are expressed over the tensor product algebra $\ndo\CC^N\ot\ndo\CC^N\ot U_h(\mathfrak{gl}_N)$ using the matrices
$$
L^\pm_1=\sum_{i,j=1}^N e_{ij}\ot 1\ot l_{ij}^\pm\Fand
L^\pm_2=\sum_{i,j=1}^N 1\ot e_{ij}\ot l_{ij}^\pm .
$$

The   {\em reflection equation algebra} $\mathcal{O}_h(Mat_N)$
  over the  ring $\CC[[h]] $   can be also presented in terms of the $R$-matrix  \eqref{R}.
It is defined as the algebra generated by the elements $\ell_{ij}$, where $i,j=1,\ldots ,N$, subject to the defining relations,  expressed over the tensor product algebra $\ndo\CC^N\ot\ndo\CC^N\ot \mathcal{O}_h(Mat_N)$,  
\beq\label{def_refl}
R \ts \mathcal{L}_1\ts R_{21}\ts \mathcal{L}_2
=\mathcal{L}_2\ts R \ts \mathcal{L}_1\ts R_{21} ,
\qquad\text{where}\qquad 
\mathcal{L}=\sum_{i,j=1}^N e_{ij}\ot \ell_{ij} .
\eeq
Note that in \eqref{def_refl}, as with \eqref{rlls} above, we use the standard tensor notation so that
$$
\mathcal{L}_1=\sum_{i,j=1}^N e_{ij}\ot1\ot \ell_{ij} 
\fand
\mathcal{L}_2=\sum_{i,j=1 }^N 1\ot e_{ij}\ot \ell_{ij}  
$$
and  $R_{21}=PRP$, where $P$ is the permutation operator
\beq\label{permutation}
P=\sum_{i,j=1}^{N} e_{ij}\ot e_{ji}  .
\eeq
Throughout the paper, we assume that the algebras  $\mathcal{O}_h(Mat_N)$ and $U_h(\mathfrak{gl}_N)$ are   completed with respect to the $h$-adic topology.
For more information on quantized enveloping algebras and reflection equation algebras see, e.g.,   \cite{GPyS,JW,KS}.

Let $c\in\CC$. Denote by $\Vc$ the Etingof--Kazhdan  {\em  quantum affine vertex algebra at the level $c$} \cite{EK} associated with the     $R$-matrix
\beq\label{Reu}
R(e^u) = f(u)\left(R + \left(e^{h/2}-e^{-h/2}\right)e^u\left(1-e^u\right)^{-1} P\right)\in
\ndo\CC^N \ot \ndo\CC^N ((u))[[h]]
,
\eeq
where $f(u)$ is a certain normalization series\footnote{For more information, see \cite[Prop. 1.2]{EK4} and \cite[Prop. 4.7]{FR}. The normalization term $f(u)\in 1+h\CC((u))[[h]]$ coincides, up to a nonzero scalar, with $ (1-e^u   ) (1-e^{u-h } )^{-1}g(u)$ with $g(u)$ given by \cite[Prop. 2.1]{KM}.} while $R$ and $P$ are given by \eqref{R} and \eqref{permutation} respectively.

The goal of this paper is to establish a  connection  between  the quantum vertex algebra $\Vc$ and the  representations of quantized enveloping algebra $U_h(\mathfrak{gl}_N)$ and   reflection equation algebra  $\mathcal{O}_h(Mat_N)$.
Our main tool is Li's   theory of {\em $\phi$-coordinated modules} \cite{Liphi}, which was introduced to associate quantum vertex algebras with quantum affine algebras; see also  \cite{JKLT, Kong, Kong2} and the references therein.
Namely, the quantum vertex algebra $\Vc$ is essentially an additive structure, in the sense that its braiding $\Sc$ (defined by \eqref{braiding_reqs2} below) satisfies the {\em additive} quantum Yang--Baxter equation,
$$
\Sc_{12}(z_1)\ts \Sc_{13}(z_1+z_2)\ts \Sc_{23}(z_2)
=
\Sc_{23}(z_2)\ts \Sc_{13}(z_1+z_2)\ts\Sc_{12}(z_1).
$$
On the other hand, the vertex operators which we use to establish the aforementioned connection induce the  braiding $\sigma$ (defined by \eqref{sigma_map} below) which satisfies  the {\em multiplicative} quantum Yang--Baxter equation,
$$
\sigma_{12}(z_1)\ts \sigma_{13}(z_1 z_2)\ts \sigma_{23}(z_2)
=
\sigma_{23}(z_2)\ts \sigma_{13}(z_1 z_2)\ts\sigma_{12}(z_1).
$$
This structural gap is bridged by the use of the $\phi$-coordinated modules theory.

Another challenge in connecting the representations of $U_h(\mathfrak{gl}_N)$ and  $\mathcal{O}_h(Mat_N)$ with $\Vc$ is the fact that the corresponding braidings $\Sc$ and $\sigma$ are not   related in a usual way,
i.e., by an associate of the one-dimensional additive formal group; cf. \cite[Sect. 2]{Liphi}.
Thus, we can not directly apply the theory of $\phi$-coordinated modules. 
Instead, to set up the right framework, we   generalize the notion of {\em  deformed} $\phi$-coordinated module  from our recent joint work with Jing \cite{BJK}; see also \cite{BK0,BK}.  
While the ordinary notion of ($\phi$-coordinated) module features the weak associativity property for the underlying module map, this is no longer the case for the deformed ($\phi$-coordinated) modules. 
However, this nonassociativity in the deformed ($\phi$-coordinated) module setting occurs  in a controlled way,
   governed  by a certain map $\rho$, which   provides a partial link between $\Sc$ and $\sigma$.

The main results of the manuscript are Corollary \ref{kor_44} and Theorem \ref{main_tthm} which, under certain conditions, establish a connection between the representations of      $U_h (\mathfrak{gl}_N)$ and $\mathcal{O}_h(Mat_N)$ and the deformed $\phi$-coordinated $\Vc$-modules. 
Next, we investigate the images of certain families of central elements of $\Vc$ at the critical level $c=-N$, found in  \cite{BJK, KM}, under the module map  given by   Corollary \ref{kor_44}. Note that    the classical limit $h\to 0$ of $\Vc$ is the level $c$ universal affine vertex algebra $V^c(\mathfrak{gl}_N)$ 
\cite{FZ,Lian}, so that these families are quantum analogues of the elements of the   Feigin--Frenkel center \cite{FF}.
We show that their images produce $q$-analogues of Okounkov's quantum immanants \cite{Ok}, which were recently found by Jing, Liu and Molev \cite{JLM}.  

At the end, we use the construction from our recent work \cite{BK0}  to partially extend these results to the case of Etingof--Kazhdan's quantum affine vertex algebra $\Vc^{rat}$ associated with the suitably normalized Yang $R$-matrix. This yields a connection between representations of the universal enveloping algebra $U  (\mathfrak{gl}_N)$ and deformed  $\Vc^{rat}$-modules. Also, we show that  certain elements of the center of $\Vc^{rat}$ at the critical level $c=-N$, which were found in \cite{JKMY}, give rise to Okounkov's quantum immanants.

\section{Preliminaries}\label{preliminaries}
In this section, we recall the trigonometric $R$-matrix, which governs  the RTT-realization of the quantum affine algebra in type $A$; see   \cite{FRT,J,PS,RS} for more information. Next, we recollect    Etingof--Kazhdan's   construction \cite{EK} of the quantum  affine vertex algebra associated with this $R$-matrix.

\subsection{Trigonometric $R$-matrix} 
Let
\beq\label{rbar_1}
\R(z)=
R+\frac{(e^{h/2 } -e^{-h/2}) z}{1-z}P \in  \ndo\CC^N\ot\ndo\CC^N  [[z,h]],
\eeq
where  the operators $R$ and $P$ are given by \eqref{R} and \eqref{permutation}, respectively. Denote by
$$
f_q(z)=1+\sum_{r=1}^\infty f_{q,r} \frac{z^r}{(1-z)^{r}}\in\CC(q)[[z]] 
$$  
  a  unique formal power series 
  such that all $f_{q,r}(q-1)^{-r}$ are regular at $q=1$, satisfying
$$
f_q(zq^{2N})=f_q(z) (1-zq^2)(1-zq^{2N-2}) (1-z)^{-1}(1-zq^{2N})^{-1};
$$
see   \cite{EK4,FR,KM} for more information on  $f_q(z)$.
By setting $q=e^{h/2},$ we obtain  
\beq\label{rbar_2}
f(z)\coloneqq f_{e^{h/2}}(z)=1+\sum_{r=1}^\infty f_{r} \frac{z^r}{(1-z)^{r}}\in\CC[[h,z]] ,\quad\text{where}\quad
f_r=f_{q,r}\big|_{q=e^{h/2}}.
\eeq
Finally,  define the normalized $R$-matrix by
\beq\label{R_mult}
R(z)= \frac{e^{-h/2}(1-z) f(z)}{1-ze^{-h}}   \ts \R(z) \in  \ndo\CC^N\ot\ndo\CC^N  [[z,h]].
\eeq
The $R$-matrix $R(z)$ can be also  regarded as an element  of $ \ndo\CC^N\ot\ndo\CC^N (z)[[h]] $, i.e., as a rational function with respect to $z$. It satisfies
the  {\em quantum Yang--Baxter equation},  
\beq\label{quantumYBE}
R_{12}(z_1)\ts R_{13}(z_1 z_2)\ts R_{23}(z_2)
 =   R_{23}(z_2)\ts R_{13}(z_1 z_2)\ts R_{12}(z_1) 
\eeq
and the {\em crossing symmetry properties,}
\beq\label{csym_m}
R(ze^{ Nh})^{t_1} \ts D_1\ts  ( R(z)^{-1})^{t_1}=D_1\fand (R(z)^{-1})^{t_2} \ts D_2 \ts R(z e^{ Nh})^{t_2} = D_2,
\eeq
where 
$t_i$ denotes   the   transposition $e_{rs}\mapsto e_{sr}$ applied on the $i$-th tensor factor  and 
\beq\label{diagonal}
D=\diag\left(e^{(N-1)h/2},e^{(N-3)h/2} ,\ldots , e^{-(N-1)h/2}\right) \in\ndo\CC^N [[h]].
\eeq 

Consider the unique embedding 
$\CC_*(u)\hookrightarrow\CC((u))  ,$ where $\CC_*(u)$ denotes   the localization of the ring of formal Taylor series $\CC[[u]]$ at $\CC[u]\setminus\left\{0\right\}.$ 
It naturally extends to the embedding
$\iota_u \colon \CC_*(u)[[h]]\hookrightarrow\CC((u)) [[h]]$.
By setting $z=e^u$ in \eqref{R_mult} and then applying  
 $\iota_u$ to the resulting expression, one obtains, up to a nonzero scalar, the $R$-matrix \eqref{Reu}.

Let $u=(u_1,\ldots ,u_n)$ and $v=(v_1,\ldots ,v_m)$ be two families of variables, $z$ a single variable and $a\in\CC$. Define
the formal power series with coefficients in
$(\ndo\CC^N)^{\ot n} \ot (\ndo\CC^N)^{\ot m}$,
\beq\label{Rnm1}
R_{nm}^{12}(e^{z+u -v  +ah})=
\prod_{r=1,\dots,n }^{\longrightarrow} \prod_{s=n+1,\dots,n+m }^{\longleftarrow}
R_{rs}(e^{z+ u_r-v_{s-n}   + ah}),
\eeq
where the arrows indicate the order of the factors and the superscript $1$ (resp. $2$) corresponds to the tensor factors $1,\ldots ,n$ (resp. $n+1,\ldots ,n+m$).
The same expression with variable $z$   omitted is denoted by $R_{nm}^{12}(e^{u -v  +ah})$. We shall apply the same notation convention to the $R$-matrices \eqref{R} and   \eqref{R_mult}, so that  we have
\begin{align}
&R_{nm}^{12} =
\prod_{r=1,\dots,n }^{\longrightarrow} \prod_{s=n+1,\dots,n+m }^{\longleftarrow}
R_{rs},\label{Rnm}\\
&R_{nm}^{12}(ze^{ u -v  +ah})=
\prod_{r=1,\dots,n }^{\longrightarrow} \prod_{s=n+1,\dots,n+m }^{\longleftarrow}
R_{rs}(ze^{  u_r-v_{s-n}   +ah}).\label{Rnm3}
\end{align}
Furthermore, for $x=(x_1,\ldots ,x_n)$ and $y=(y_1,\ldots ,y_m)$, we write
\beq\label{Rnm4}
 R_{nm}^{12}(x/y)=
\prod_{r=1,\dots,n }^{\longrightarrow} \prod_{s=n+1,\dots,n+m }^{\longleftarrow}
R_{rs}(x_r/y_{s-n}).
\eeq

By swapping the superscripts $1$ and $2$ in \eqref{Rnm1}--\eqref{Rnm4}, we shall indicate that the factors on the right-hand side are conjugated by the permutation operator $P$ and that the product is taken in the opposite order, e.g., we have
$$
R_{nm}^{21} =
\prod_{r=1,\dots,n }^{\longleftarrow} \prod_{s=n+1,\dots,n+m }^{\longrightarrow}
P_{rs}\ts R_{rs}\ts P_{rs} =
\prod_{r=1,\dots,n }^{\longleftarrow} \prod_{s=n+1,\dots,n+m }^{\longrightarrow}
  R_{sr} .
$$

Denote by   $R(z)^\prime$    the inverse of the $R$-matrix 
  \eqref{R_mult}  in
$   \left(\ndo\CC^N\right)^{\text{op}} \ot \ndo\CC^N[[z,h]] 
$. 
 The explicit expression for $R(z)^\prime$ can be   derived from the crossing symmetry properties  \eqref{csym_m}. 
Moreover, as $R(z)\left|_{z=0} =e^{-h/2}R\right.$, by setting $z=0$ in \eqref{csym_m}, we see that the $R$-matrix $R$, given by \eqref{R}, is also invertible in 
$\left(\ndo\CC^N\right)^{\text{op}} \ot \ndo\CC^N[[ h]]  $. We denote its inverse by $R^\prime$. 
Finally, we extend this notation to the products of $R$-matrices 
\eqref{Rnm1}--\eqref{Rnm4}
and write, e.g., $\left(R_{nm}^{12}\right)^\prime$,
thus indicating that  the inverse is taken with respect to the multiplication in
$\left(\left(\ndo\CC^N\right)^{\text{op}} \right)^{\ot n}\ot\left( \ndo\CC^N\right)^{\ot m}$.

\subsection{Quantum affine vertex algebra \texorpdfstring{$\Vc$}{Vc(glN)}} 

Following \cite{EK4}, we introduce a certain   algebra  which plays a key role in the Etingof--Kazhdan construction; see also \cite{FRT,RS}.
Let $U(R) $
be an associative algebra with unit $\vac$ over the ring $\CC[[h]]$ generated by the elements $t_{ij}^{(-r)}$,   $i,j=1,\ldots ,N$ and $r=1,2,\ldots .$
Its defining  relations are given by
\beq\label{rtt}
R(e^{u-v})\ts T^+_{1} (u)\ts T^+_2  (v)=  T^+_2  (v)\ts T^+_{1} (u)\ts R(e^{u-v}),
\eeq
where the formal power series  $T^+ (u) \in\ndo\CC^N \ot U(R)[[u]]$ is defined by
\beq\label{tplustrig}
T^+ (u) =\sum_{i,j=1}^N e_{ij}\ot t^+_{ij}  (u)
\qquad\text{with}\qquad
 t^+_{ij} (u)=\delta_{ij}-h\sum_{r=1}^{\infty}t_{ij}^{(-r)}u^{r-1} .
\eeq

 Suppose $V$ is a topologically free $\CC[[h]]$-module, i.e., $V=V_0[[h]]$ for some complex vector space $V_0$. We denote by $V((z))_h$ (resp. $V[z^{-1}]_h$) the $h$-adic completion of $V((z))$ (resp. $V[z^{-1}]$). In other words, we have
$V((z))_h =V_0((z))[[h]]$ (resp. $V[z^{-1}]_h = V_0[z^{-1}][[h]]$).
 From now on,  all tensor products are understood as $h$-adically completed, e.g., we have $V\ot V =V_0\ot V_0[[h]]$.
 
Let $\Vc  $ be the $h$-adic completion
 of the $\CC[[h]]$-module of $U(R) $.  The complex parameter $c $ in the superscript indicates the action of the operator series $T^-(u)$ 
from the next lemma,  which goes back to  \cite[Lemma 2.1]{EK}. 

\begin{lem}\label{lemma21}
For any $c\in\CC$ there exists a unique invertible operator series
$$T^- (u)\in\ndo\CC^N \ot \om( \Vc ,\Vc  ((u))_h )$$
such that   for all $n\geqslant 0$ we have
\beq\label{teminus}
 T^-_{0} (u)\ts T_{1}^+ (v_1)\ldots T_{n}^+ (v_n)\vac=
R^{01}_{1n}(e^{u-v+hc/2})^{-1}
\ts T_{1}^+ (v_1)\ldots T_{n }^+ (v_n) \ts  R^{01}_{1n}(e^{u-v-hc/2})\vac.
\eeq
\end{lem}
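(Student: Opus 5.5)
Uniqueness is immediate. Since $\Vc$ is the $h$-adic completion of $U(R)$, it is topologically spanned by the coefficients of the products $T_1^+(v_1)\ldots T_n^+(v_n)\vac$ with $n\geqslant 0$. Formula \eqref{teminus} therefore fixes $T^-(u)$ on a topologically dense spanning set. The map is $\CC[[h]]$-linear and $h$-adically continuous, so it is unique.

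For existence, I define the action first on the tensor algebra and then descend. Let $\widetilde U$ be the free algebra on the $t_{ij}^{(-r)}$, with $\widetilde T^+(u)$ as in \eqref{tplustrig}, and let $\pi\colon\widetilde U\to U(R)$ be the quotient map. On $\widetilde U$ I define $T^-_0(u)$ by the right-hand side of \eqref{teminus}. This is well defined, because monomials in the free algebra are linearly independent, so no relations have to be checked at this stage. The result lies in $\ndo\CC^N\ot\widetilde U((u))[[h]]$ because of two facts.
\begin{itemize}
\item Each factor $R(e^{u-v_i\pm hc/2})$ expands, via $\iota_{u}$ and then in nonnegative powers of the $v_i$, as a series in $u$ with only finitely many negative powers modulo each $h^k$.
\item For fixed total degree in the $v$'s, only finitely many terms contribute.
\end{itemize}
Here \eqref{Reu} is used: the singular part $e^u(1-e^u)^{-1}$ has only a simple pole at $u=0$ for each $h$-order. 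This should be stated carefully: we expand in $v$ first and then take the coefficient of each monomial.

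Next comes the descent to $U(R)$, which I expect to be the main obstacle. I must show that the kernel of $\pi$, generated by the RTT relations \eqref{rtt}, is mapped into itself. Insert a relation between positions $i$ and $i+1$, that is, take $A(R(e^{v_i-v_{i+1}})T^+_iT^+_{i+1}-T^+_{i+1}T^+_iR(e^{v_i-v_{i+1}}))B\vac$. Conjugating the middle factors by $R_{0i}R_{0,i+1}$, the quantum Yang--Baxter equation \eqref{quantumYBE} (in additive form, with shifts $\pm hc/2$ and $z=e^{u-v}$) gives
\[
R_{0i}R_{0,i+1}R_{i,i+1}=R_{i,i+1}R_{0,i+1}R_{0i}.
\]
This lets me move $R_{i,i+1}$ through the dressing on both sides. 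The shifts $+hc/2$ and $-hc/2$ are constant on each side, so YBE applies to each side separately. Hence the image is again an expression of the same form, namely a sum of terms containing an RTT relation. So $T^-(u)$ preserves $\ker\pi$ and descends to $\Vc$.

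Finally, invertibility holds because $T^-(u)\equiv 1$ modulo $h$. The $R$-matrices are $1+O(h)$, since $R=1+O(h)$ and the $P$-term carries the factor $e^{h/2}-e^{-h/2}$, and $f\in1+h\CC[[h,z]]$. The inverse is then given by a convergent $h$-adic Neumann series. Alternatively, one can write it explicitly by swapping the two dressing $R$-matrices.
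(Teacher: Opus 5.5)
Your descent step is correct and is the standard route; the paper gives no proof of its own and only cites \cite[Lemma 2.1]{EK}. Write $R'_{0k}=R_{0k}(e^{u-v_k-hc/2})$. After you move $R_{i,i+1}$ through the dressings by the Yang--Baxter equation, both terms carry the same left dressing $\cdots R_{0,i+1}^{-1}R_{0i}^{-1}\cdots$ and the same right dressing $\cdots R'_{0,i+1}R'_{0i}\cdots$, so the RTT ideal is preserved.

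The gap is that you never account for the factor $h$ in $t^+_{ij}(v)=\delta_{ij}-h\sum_r t^{(-r)}_{ij}v^{r-1}$. The coefficients of $T^+_{[n]}(v)\vac$ are not monomials in the generators. Their $\CC[[h]]$-span is only the sublattice generated by the $h\,t^{(-r)}_{ij}$. So \eqref{teminus} determines $T^-(u)$ on $t^{(-r_1)}_{i_1j_1}\cdots t^{(-r_n)}_{i_nj_n}\vac$ only after dividing by $h^n$. This is harmless for uniqueness, since the target is torsion-free. For existence, however, you must show that no negative powers of $h$ appear, and your invertibility argument breaks down. The observation that the $R$-matrices are $1+O(h)$ only shows that $T^-(u)-1$ maps that sublattice into $h\Vc((u))_h$. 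Already for $n=1$, $T^-_0(u)\,t^{(-r)}_1\vac$ contains the term $h^{-1}\bigl(R_{01}(e^{u-v+hc/2})^{-1}R_{01}(e^{u-v-hc/2})-1\bigr)\vac$. This term is $O(h)$ only because the two dressings differ by a shift that is itself $O(h)$, so they are mutually inverse up to $O(h^2)$.

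The missing argument is an inclusion--exclusion. Write $R_{0k}(e^{u-v_k+hc/2})^{-1}=1+hA_k$, $T^+_k(v_k)=1+hB_k$ and $R'_{0k}=1+hC_k$. For $S\subseteq\{1,\ldots,n\}$, let $R_{0S}^{-1}$, $T^+_S$ and $R'_{0S}$ be the factors of \eqref{teminus} restricted to the indices in $S$. Then $T^-_0(u)\prod^{\to}_{k=1,\ldots,n}(T^+_k-1)\vac=\sum_{S}(-1)^{n-|S|}R_{0S}^{-1}T^+_SR'_{0S}\vac$. On expanding, only words in which every index $k$ occurs (as $A_k$, $B_k$ or $C_k$) survive. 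This gives divisibility by $h^n$, i.e. integrality.

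The $h^n$-coefficient is a sum over partitions $(S_A,S_B,S_C)$ of $\{1,\ldots,n\}$. Use $A_k\equiv -C_k \pmod h$, and the fact that $A_k$ and $C_k$ commute with $B_l$ for $l\neq k$. Then the terms with $S_A\cup S_C\neq\emptyset$ assemble into the multilinear part of $\prod^{\to}_k(1-\epsilon_kC_k)\prod^{\leftarrow}_k(1+\epsilon_kC_k)=1$ (with $\epsilon_k^2=0$), and so they cancel. This cancellation, not merely $R=1+O(h)$, is what gives $T^-(u)\equiv 1\pmod h$ on all of $\Vc$. After that, your Neumann-series argument works.

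Your alternative of swapping the dressings is not correct as stated. From \eqref{RTT3} one gets $T^-_0(u)^{-1}R_{01}(e^{u-v+hc/2})^{-1}T^+_1(v)=T^+_1(v)R_{01}(e^{u-v-hc/2})^{-1}T^-_0(u)^{-1}$. Here an $R$-matrix is trapped in the same tensor factor as $T^-_0(u)^{-1}$, so an explicit inverse requires the crossing symmetry \eqref{csym_m}, as the paper does for $\rho$.
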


From now on, we regard  $T^+ (u)$  as   operator series  over $\Vc $, 
such that its action is given by the algebra multiplication.
For any $n$ and the   variables $u=(u_1,\ldots ,u_n)$ define
\beq\label{tplusovi_trig}
T_{[n]}^{\pm} (u )=T_1^\pm( u_1)\ldots T_n^\pm ( u_n)
\fand
T_{[n]}^{\pm} (u|z)=T_1^\pm(z+u_1)\ldots T_n^\pm (z+u_n).
\eeq 
The operator series \eqref{tplusovi_trig} satisfy the following RTT-relations  \cite[Sect. 2.1.2]{EK}.

\begin{pro} For any positive integers $n$ and $m$ and the families of variables $u=(u_1,\ldots ,u_n)$
 and $v=(v_1,\ldots ,v_m)$ the following identities hold:
\begin{gather}
R_{nm}^{12}(e^{z_1-z_2+u-v})T_{[n]}^{\pm 13}(u|z_1)T_{[m]}^{\pm 23}(v|z_2) 
=\,T_{[m]}^{\pm 23}(v|z_2)T_{[n]}^{\pm 13}(u|z_1)R_{nm}^{12}(e^{z_1-z_2+u-v}),\non\\
R_{nm}^{\ts 12}(e^{z_1-z_2+u-v+hc/2})T_{[n]}^{-13}(u|z_1)T_{[m]}^{+23}(v|z_2)
=\,T_{[m]}^{+23}(v|z_2)T_{[n]}^{-13}(u|z_1)R_{nm}^{\ts 12}(e^{z_1-z_2+u-v-hc/2}).
\label{RTT3}
\end{gather}
\end{pro}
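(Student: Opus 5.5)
The plan is to reduce both families of identities to the single-copy relations and then assemble the products through the Yang--Baxter equation \eqref{quantumYBE}. We start with the case $n=m=1$. For the $+$ sign, the identity is the defining relation \eqref{rtt} after the shifts $u\mapsto z_1+u_1$ and $v\mapsto z_2+v_1$. This is legitimate because $R(e^{u-v})$ depends only on the difference of the arguments, and the series $T^+(z+u)$ is understood through its Taylor expansion in $z$.

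For the mixed relation with $n=m=1$, we test both sides on an arbitrary vector $T^+_{[k]}(w)\vac$. Such vectors span a dense subspace of $\Vc$, so this suffices. Applying \eqref{teminus} to the vector $T^+_{2}(v)T^+_{[k]}(w)\vac$ on the left-hand side and to $T^+_{[k]}(w)\vac$ on the right-hand side, the $T^-$-factors turn into products of $R$-matrices. The product $R^{01}_{1,k+1}$ splits as the factor $R_{02}$ times $R^{0}_{1k}$. Comparing the two sides then reduces to \eqref{rtt} for $T^+_1T^+_2$ with the shifted spectral parameters $\pm hc/2$. Next we use the unitarity-type rearrangement $R_{12}(e^{u-v+hc/2})R_{02}(\cdot)^{-1}\cdots$ together with \eqref{quantumYBE}, written multiplicatively with $z_1z_2$ replaced by the appropriate exponentials. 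These move $R_{12}$ past the $R_{1\,\cdot}$-factors coming from the $T^-$ action. Essentially, one has to check that the operator $R_{12}(e^{u-v+hc/2})$ intertwines the two expressions $R(e^{u-w+hc/2})^{-1}\,T^+\,R(e^{u-w-hc/2})$ for each tensor factor of $w$. This is a YBE computation.

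The $-$ relation with $n=m=1$ is handled the same way. We apply $T^-_1T^-_2$ to $T^+_{[k]}(w)\vac$ and use \eqref{teminus} twice; the second application uses the mixed relation just established, or more directly the fact that \eqref{teminus} gives the composite action. The result is an expression of the form
\[
R^{-1}_{1w}R^{-1}_{2w}\,T^+\,R_{2w}R_{1w}.
\]
The relation then follows from the YBE in the form $R_{12}R_{1w}R_{2w}=R_{2w}R_{1w}R_{12}$, applied once for the inverse factors and once for the direct factors. The inverse and direct factors carry the shifts $+hc/2$ and $-hc/2$ respectively, which cancel in the differences of spectral parameters. A point to note is that $T^-$ is only defined through its action, so invertibility from Lemma~\ref{lemma21} and uniqueness guarantee that the relation holds as operators.

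For general $n,m$ we induct on $n$ and $m$ in the standard way. The product $R^{12}_{nm}$ is ordered so that each factor $T_r(z_1+u_r)$ can be moved past the block $T^{23}_{[m]}(v|z_2)$ one at a time. At each step we use the $n=m=1$ relation with arguments $z_1+u_r$ and $z_2+v_s$. Since the $R$-matrix depends only on $e^{z_1-z_2+u_r-v_s}$ (possibly shifted by $\pm hc/2$), we obtain exactly the factors $R_{rs}$ in the prescribed arrow order. The main obstacle is the $n=m=1$ mixed and $T^-T^-$ cases. There one has to verify carefully that all expansions (the maps $\iota_u$, and the expansion of $T^-(u)$ in $\om(\Vc,\Vc((u))_h)$) are taken in compatible domains, so that the YBE manipulations with shifted arguments are valid. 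We would handle this by working in $\ndo\CC^N\ot\ndo\CC^N((u))[[h]]$ modulo $h^k$ for each $k$.
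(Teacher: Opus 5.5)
Your proof is correct. It is essentially the standard Etingof--Kazhdan derivation, which the paper does not reproduce but simply cites from \cite[Sect. 2.1.2]{EK}: you reduce to $n=m=1$, obtain the $T^+T^+$ case from \eqref{rtt}, test the other two cases on $T^+_{[k]}(w)\vac$ using \eqref{teminus} and \eqref{quantumYBE}, and handle general $n,m$ by reordering the factors of $R^{12}_{nm}$. Testing on these vectors suffices because $\Vc$ is topologically free; note that they do not span a dense subspace, only $h^k$-multiples of monomials. The mixed $n=m=1$ relation is simpler than you describe and needs neither \eqref{rtt} nor \eqref{quantumYBE}: by \eqref{teminus}, $R_{12}(e^{u-v+hc/2})$ cancels the outermost inverse factor, the rightmost direct factor is exactly $R_{12}(e^{u-v-hc/2})$, and $T^+_2(v)$ commutes with the remaining factors $R_{1j}^{-1}$, $j\geqslant 3$, so that relation is just a restatement of Lemma~\ref{lemma21}.
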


We now recall  Etingof--Kazhdan's construction \cite[Thm. 2.3]{EK}. For a precise definition of the notion of ($h$-adic) quantum vertex algebra see \cite[Sect. 1.4.1]{EK} and \cite[Def. 2.4]{Li}.

\begin{thm}\label{EK:qva}
Let $c\in\CC .$ There exists a unique   structure of  quantum vertex algebra over $\Vc $
  such that 
	the vertex operator map $Y(\cdot ,z)$ is given by
\beq\label{Ymap}
Y\big(T^+_{[n]}  (u)\vac,z\big)=T^+_{[n]}  (z|u)\ts T_{[n]}^- (z+hc/2|u)^{-1}, 
\eeq
the vacuum vector is   $\vac$	
and the braiding  map $\mathcal{S}$ is defined by  
\begin{align}
&\mathcal{S}(z)\left( R_{nm}^{  12}(e^{z+u-v  })^{-1}\ts  T_{[m]}^{+ 24}(v) \ts R_{nm}^{  12}(e^{z+u-v -h  c}) \ts T_{[n]}^{+ 13}(u)  (\vac\otimes \vac) \right)\non\\
 =\, &   T_{[n]}^{+ 13}(u)\ts    R_{nm}^{  12}(e^{z+u-v +h  c} )^{-1} \ts  
 T_{[m]}^{+ 24}(v) \ts   R_{nm}^{  12}(e^{z+u-v })(\vac\otimes \vac) . \label{braiding_reqs2}
\end{align}
\end{thm}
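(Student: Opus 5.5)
The plan follows Etingof--Kazhdan's argument for Theorem \ref{EK:qva}: define the vertex operator map on the spanning vectors, obtain the structure from a general existence theorem, and read off the braiding from the RTT relations. Since $\Vc$ is the $h$-adic completion of $U(R)$ and $U(R)$ is generated by the coefficients of $T^+(u)$, the vectors $T^+_{[n]}(u)\vac$ (more precisely, their coefficients in $u_1,\ldots,u_n$) topologically span $\Vc$. Hence \eqref{Ymap} determines $Y$ uniquely, provided it is well defined. This gives uniqueness, and I will prove existence with an $h$-adic version of the standard existence theorem for (quantum) vertex algebras, as in Li's framework \cite{Li}. That theorem requires four inputs: a vacuum $\vac$; a translation operator $D$ with $D\vac=0$; a family of fields $T^+(z)T^-(z+hc/2)^{-1}$ which are $\mathcal{S}$-local; and a completeness statement saying that iterated products of these fields applied to $\vac$ span the space.

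I will proceed in the following steps.

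\textbf{Fields and well-definedness.} Set $\mathcal{T}(z)=T^+(z)\,T^-(z+hc/2)^{-1}$. By Lemma \ref{lemma21}, $T^-(z)$ maps $\Vc$ into $\Vc((z))_h$, so $\mathcal{T}(z)$ is a field. Next I show that the products $\mathcal{T}_1(z+u_1)\cdots\mathcal{T}_n(z+u_n)$ can be reordered, using the second RTT relation \eqref{RTT3}, into the form $T^+_{[n]}(z|u)\,T^-_{[n]}(z+hc/2|u)^{-1}$. For this to be well defined I must check that the right-hand side of \eqref{Ymap} respects the defining relations \eqref{rtt}. This follows from the first identity of the Proposition for both $T^+$ and $T^-$: the same matrix $R(e^{u-v})$ intertwines $T^+_{[n]}$ and, after inversion, $T^-_{[n]}(\cdot)^{-1}$.

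\textbf{Vacuum and creation properties.} From \eqref{teminus} with $n=0$ we get $T^-(u)\vac=\vac$. Therefore $Y(T^+_{[n]}(u)\vac,z)\vac=T^+_{[n]}(z|u)\vac$, and setting $z=0$ recovers the vector $T^+_{[n]}(u)\vac$. The translation operator $D$ is the derivation induced by $\partial_u$ on $T^+(u)$. I then verify $[D,Y(w,z)]=\partial_z Y(w,z)$ on generators; this is immediate because the dependence on $z$ enters only through the shift $z+u_i$.

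\textbf{$\mathcal{S}$-locality and the braiding.} This is the main obstacle. I need to show that
\[
Y(a,z_1)\,Y(b,z_2)\quad\text{and}\quad Y\bigl(\mathcal{S}(z_1-z_2)(b\ot a),\,z_2\bigr)\,Y(a,z_1)
\]
agree after multiplication by a suitable power of $(z_1-z_2)$, with $\mathcal{S}$ given by \eqref{braiding_reqs2}. I will first compute the product of two fields $T^+_{[n]}(z_1|u)\,T^-_{[n]}(\cdot)^{-1}\,T^+_{[m]}(z_2|v)\,T^-_{[m]}(\cdot)^{-1}$. Applying \eqref{RTT3}, I move $T^-_{[n]}(\cdot)^{-1}$ past $T^+_{[m]}$; this produces the factors $R_{nm}^{12}(e^{z_1-z_2+u-v\pm hc})$ that appear in \eqref{braiding_reqs2}. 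I then commute the $T^+$ factors among themselves, and likewise the $T^-$ factors, using the first RTT relation. The two orderings then differ exactly by the conjugation that defines $\mathcal{S}(z)$ in \eqref{braiding_reqs2}.

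The delicate point is that the $R$-matrices are expanded in different regions: one ordering uses $\iota_{z_1,z_2}$ and the other $\iota_{z_2,z_1}$. I plan to handle this by multiplying through by the denominators of $R(e^{z})$ so that both sides become equal rational expressions, working modulo $h^k$ for each $k$. Afterwards I must check that $\mathcal{S}$ is well defined and satisfies the shift condition and the Yang--Baxter equation. The additive form of the Yang--Baxter equation follows from \eqref{quantumYBE} with $z_i=e^{u_i}$. Finally, applying Li's existence theorem gives the quantum vertex algebra structure; nondegeneracy and unitarity of $\mathcal{S}$ come from the invertibility of the $R$-matrices.
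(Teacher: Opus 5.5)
The paper gives no proof of this theorem; it is quoted from \cite[Thm.~2.3]{EK}, so your plan has to stand on its own. Your check that \eqref{Ymap} respects \eqref{rtt} is sound, and so is your use of \eqref{RTT3} to produce the factors $R^{12}_{nm}(e^{z+u-v\pm hc})$. The gap is in the step meant to connect the generating field $\mathcal{T}(z)=T^+(z)T^-(z+hc/2)^{-1}$ with \eqref{Ymap}: that step is false.

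Relation \eqref{RTT3} does not let you move $T^-_1(x_1+hc/2)^{-1}$ past $T^+_2(x_2)$. It only gives $T^-_1(x_1+hc/2)^{-1}R(e^{x_1-x_2+hc})^{-1}T^+_2(x_2)=T^+_2(x_2)R(e^{x_1-x_2})^{-1}T^-_1(x_1+hc/2)^{-1}$. Combined with the RTT relation for $T^-$, this yields
\[
\mathcal{T}_1(x_1)\,R(e^{x_1-x_2+hc})^{-1}\,\mathcal{T}_2(x_2)\,R(e^{x_1-x_2})=T^+_1(x_1)\,T^+_2(x_2)\,T^-_2(x_2+hc/2)^{-1}\,T^-_1(x_1+hc/2)^{-1}.
\]
So the normal-ordered product is the product of fields with $R$-matrices sandwiched in between; this is exactly the twist recorded in Remark \ref{Yrho_rem}. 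It is not the plain product $\mathcal{T}_1(z+u_1)\cdots\mathcal{T}_n(z+u_n)$. At order $h^2$ the plain product contains the singular part of the affine current OPE and is singular along $x_1=x_2$, so it does not even make sense as a power series in the $u_i$, whereas the right-hand side of \eqref{Ymap} does.

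Consequently, if the structure is produced from $\mathcal{T}(z)$ by a generating-field existence theorem, you have not shown that its vertex operators are given by \eqref{Ymap}. Your locality computation, which already uses \eqref{Ymap} for arbitrary $n,m$, then becomes circular.

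\textbf{Repairing the existence argument.} Skip the generating-field theorem.
\begin{enumerate}
\item Define $Y$ on all of $\Vc$ by \eqref{Ymap}; your well-definedness check suffices.
\item Verify the vacuum, creation and $D$-properties.
\item Prove $\mathcal{S}$-locality for \emph{all} pairs, then invoke the $\mathcal{S}$-local analogue of ``locality plus vacuum axioms implies associativity''.
\end{enumerate}
Your expression $Y(\mathcal{S}(z_1-z_2)(b\ot a),z_2)Y(a,z_1)$ is not well formed. Locality should read $Y(z_1)(1\ot Y(z_2))\,\mathcal{S}(z_1-z_2)=Y(z_2)(1\ot Y(z_1))\,P$, where $P$ swaps the two factors of $\Vc\ot\Vc$, up to the usual $(z_1-z_2)^r$ and $\bmod\ h^k$ qualifications.

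It must be checked on the twisted vectors on which \eqref{braiding_reqs2} defines $\mathcal{S}$. Apply $Y(z_1)(1\ot Y(z_2))$ to the right-hand side, and $Y(z_2)(1\ot Y(z_1))$ to the left-hand side with its two $\Vc$ factors swapped. Using \eqref{RTT3}, its mirror form (which needs unitarity $R_{12}(x)R_{21}(x^{-1})=1$), and the RTT relations for $T^\pm$, both reduce to
\[
T^{+13}_{[n]}(z_1|u)\,T^{+23}_{[m]}(z_2|v)\,T^{-23}_{[m]}(z_2+hc/2|v)^{-1}\,T^{-13}_{[n]}(z_1+hc/2|u)^{-1}.
\]
Clearing denominators modulo $h^k$, as you propose, then handles the different expansions on the two sides.

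\textbf{Braiding axioms.} Your list is incomplete.
\begin{itemize}
\item An ($h$-adic) quantum vertex algebra also requires the hexagon identity $\mathcal{S}(z_1)(Y(z_2)\ot 1)=Y(z_2)\mathcal{S}_{23}(z_1)\mathcal{S}_{13}(z_1+z_2)$, which you never address.
\item The Yang--Baxter equation for $\mathcal{S}$ on $\Vc^{\ot 3}$ requires a computation with triply twisted vectors. It does not follow simply from \eqref{quantumYBE} at $z_i=e^{u_i}$.
\item Unitarity of $\mathcal{S}$ uses unitarity of $R$, not just invertibility.
\item Nondegeneracy is a property of $Y$, to be derived from the classical limit, not of the $R$-matrices. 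Once established, it is the standard way Li's theory supplies the remaining braiding axioms from $\mathcal{S}$-locality.
\end{itemize}
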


\section{Weakly compatible pair  associated with \texorpdfstring{$\Vc$}{Vc(glN)}}\label{compatible}
In this section, we associate   a certain pair of maps   with the quantum affine vertex algebra $\Vc$. The next definition generalizes the notion of multiplicative compatible pair from \cite[Def. 5.1]{BJK}.
 
\begin{defn}\label{def_br_mult}
Suppose $\mathcal{V}$ is a topologically free $\CC[[h]]$-module and
$$
\sigma(z),\rho(z)\colon \mathcal{V}\ot \mathcal{V}\to \mathcal{V}\ot \mathcal{V}\ot \CC(z)[[h]] 
$$
 are $\CC[[h]]$-module maps.
The pair $(\sigma, \rho)$ is said to be a {\em  multiplicative weakly compatible pair} if it satisfies the following conditions.

\begin{enumerate}
\item The map $\sigma$ satisfies the  quantum Yang--Baxter equation,
\begin{align}
&\sigma_{12}(z_1  )\ts\sigma_{13}(z_1z_2    )\ts\sigma_{23}(z_2  )=
\sigma_{23}(z_2  )\ts\sigma_{13}(z_1z_2   )\ts\sigma_{12}(z_1 )  \label{ybe_muh}\\
\intertext{and  the unitarity condition,}
&\sigma(1/z )\ts \sigma_{21}(z )=\sigma_{21}(z )\ts\sigma(1/z ) =1 .\label{uni_muh}
\end{align}

\item The maps $\sigma(z)$, $\rho(z)$ and $\rho_{21}^{-1}(1/z )$ are regular at $z=0$.

\item The map 
\begin{align} 
 \mathcal{S}_{\sigma,\rho} \colon \mathcal{V}\ot \mathcal{V} &\to \mathcal{V}\ot \mathcal{V},\non\\
\mathcal{S}_{\sigma,\rho}  &= \left(\rho (z )\ts \sigma (z )\ts \rho_{21}^{-1}(1/z )\right)\big|_{z=0}\big.\label{ssigmarho}
\end{align}
satisfies the  quantum  Yang--Baxter equation,  
\beq\label{braiding_reqs3}
\left(\mathcal{S}_{\sigma,\rho}\right)_{12} 
\left(\mathcal{S}_{\sigma,\rho}\right)_{13} 
\left(\mathcal{S}_{\sigma,\rho}\right)_{23}
=
\left(\mathcal{S}_{\sigma,\rho}\right)_{23}  
\left(\mathcal{S}_{\sigma,\rho}\right)_{13}  
\left(\mathcal{S}_{\sigma,\rho}\right)_{12}
\eeq 
 and the unitarity condition,  
\beq\label{braiding_reqs4}
\left(\mathcal{S}_{\sigma,\rho}\right)_{12} 
\left(\mathcal{S}_{\sigma,\rho}\right)_{21}= 
\left(\mathcal{S}_{\sigma,\rho}\right)_{21}  
\left(\mathcal{S}_{\sigma,\rho}\right)_{12}= 1.
\eeq 
\end{enumerate}
\end{defn}

We shall omit the term ``multiplicative'' and refer to any pair of maps satisfying Definition \ref{def_br_mult} more briefly as a weakly compatible pair.
Note that, by the  definition, we implicitly   require that the map $\rho$ is invertible, i.e., that
there exists a map
$   
\rho^{-1} (z )\colon \mathcal{V}\ot \mathcal{V}\to \mathcal{V}\ot \mathcal{V}\ot\CC    (z ) [[h]]
$  
such that
$
\rho  (z )\ts\rho^{-1} (z )=\rho^{-1} (z )\ts \rho  (z )=1 .
$

Let us proceed towards the construction of a   weakly compatible pair $(\sigma,\rho)$ over $\Vc$.

\begin{lem}
For any $c \in\CC $ there exists a unique map  
$$ \sigma (z)
\colon \Vc\ot\Vc \to\Vc\ot\Vc\ot \CC(z)[[h]]
$$ 
such that
\begin{align}
&\sigma (z)\left( T_{[n]}^{+ 13}(u)\ts  T_{[m]}^{+ 24}(v)    (\vac\otimes \vac) \right)\non\\
 =\, &\left( R_{nm}^{12}\right)^\prime \cdotlr\left( R_{nm}^{  12}(ze^{u-v})\ts T_{[n]}^{+ 13}(u)\ts    R_{nm}^{21}  \ts  
 T_{[m]}^{+ 24}(v) \ts   R_{nm}^{  21}(ze^{u-v})^{-1}(\vac\otimes \vac)\right), \label{sigma_map}
\end{align}
where ``$\cdotlr$'' denotes the standard multiplication in 
$\left(  \ndo\CC^N\right)^{\ot n}\ot\left(\left(\ndo\CC^N\right)^{\text{op}}\right)^{\ot m}.$
Moreover, the map is regular at $z=0$ and satisfies the quantum Yang--Baxter equation \eqref{ybe_muh} and the unitarity condition \eqref{uni_muh}.
\end{lem}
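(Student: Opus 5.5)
The plan is to follow the pattern of the Etingof--Kazhdan construction of the braiding $\mathcal{S}$ in Theorem \ref{EK:qva} and of its multiplicative analogue in \cite[Sect. 5]{BJK}. Since $\Vc$ is the $h$-adic completion of $U(R)$, the coefficients of all $T_{[n]}^{+}(u)\vac$, $n\geqslant 0$, span a dense $\CC[[h]]$-submodule of $\Vc$. Hence the coefficients of $T_{[n]}^{+13}(u)\ts T_{[m]}^{+24}(v)(\vac\ot\vac)$ span a dense submodule of $\Vc\ot\Vc$. Uniqueness therefore follows at once from \eqref{sigma_map}. For existence, I will first define $\sigma(z)$ on the tensor product $\widetilde{U}\ot\widetilde{U}$ of two copies of the free algebra $\widetilde{U}$ on the generators $t_{ij}^{(-r)}$, with formula \eqref{sigma_map} read as a definition. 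I will then show that it descends to the quotient by the ideal generated by the RTT relations \eqref{rtt}.

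Concretely, it suffices to check the following. Replace $T_{[n]}^{+13}(u)$ by $R_{12}(e^{u_i-u_{i+1}})\ts T^+_{[n]}(u)\ts R_{12}(e^{u_i-u_{i+1}})^{-1}$ acting in the tensor factors $i,i+1$, with the variables $u_i,u_{i+1}$ swapped. Then the right-hand side of \eqref{sigma_map} must transform in exactly the same way, and the analogous statement must hold in the second family of variables $v$. This is a reordering argument. Using the quantum Yang--Baxter equation \eqref{quantumYBE} for $R(z)$, and its constant versions for $R$ and for mixed products of $R$ and $R(z)$ (obtained by letting one spectral parameter tend to $0$, since $R(0)=e^{-h/2}R$), the factor $R_{i,i+1}(e^{u_i-u_{i+1}})$ can be pushed through the products $R_{nm}^{12}(ze^{u-v})$, $R_{nm}^{21}$, $R^{21}_{nm}(ze^{u-v})^{-1}$ and $(R^{12}_{nm})'$. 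It emerges on the outside with the variables permuted. Care is needed with the ``$\cdotlr$'' product: the operator $(R^{12}_{nm})'$ is multiplied in the opposite algebra on the second group of tensor factors, so in those factors the YBE has to be applied in its transposed, opposite-algebra form. I expect this bookkeeping to be the main obstacle. I would first settle the case $n=m=1$ with the second family of length $2$, and then argue by induction on $n+m$, using the recursive structure of \eqref{Rnm3} in the same way as \cite{EK} does for $\mathcal{S}$.

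Regularity at $z=0$ is immediate. $R(ze^{u-v})$ is a power series in $z$, and so is its inverse, because $R(0)=e^{-h/2}R$ is invertible. Hence the right-hand side of \eqref{sigma_map} lies in $(\ndo\CC^N)^{\ot (n+m)}\ot\Vc\ot\Vc[[u,v,z]]$, so it is a power series in $z$ (in the $h$-adic sense), which gives $\sigma(z)\in\CC(z)[[h]]$ with no pole at $0$.

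The quantum Yang--Baxter equation \eqref{ybe_muh} will be verified on the vectors $T^{+}_{[n]}(u)\ts T^{+}_{[m]}(v)\ts T^{+}_{[k]}(w)(\vac\ot\vac\ot\vac)$. Both sides produce conjugations by products of $R(z_1e^{u-v})$, $R(z_1z_2e^{u-w})$ and $R(z_2e^{v-w})$, together with constant $R$-factors. Reordering them by \eqref{quantumYBE} and its constant specializations shows that the two sides coincide. Unitarity \eqref{uni_muh} follows from the unitarity $R_{12}(z)\ts R_{21}(1/z)=1$ of the normalized $R$-matrix \eqref{R_mult}. This identity must be checked from the choice of $f(z)$: it is the reason for the normalization, and it follows from the functional equation for $f_q$. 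Given it, the conjugating factors in $\sigma(1/z)\ts\sigma_{21}(z)$ cancel pairwise, and the constant factors $R^{21}_{nm}$ and $(R^{12}_{nm})'$ cancel as well.
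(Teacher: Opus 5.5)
Your proposal follows essentially the same route as the paper, whose proof is itself only a sketch. Well-definedness, \eqref{ybe_muh} and \eqref{uni_muh} are reduced to mixed, partially transposed Yang--Baxter equations for $R$ and $R(z)$, and regularity at $z=0$ is read off from $R(z)$ being a power series in $z$ with $R(0)=e^{-h/2}R$ invertible.

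Two small corrections, neither of which affects the argument:

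The coefficients of $T^+_{[n]}(u)\vac$ are not dense, since they all lie in $\CC[[h]]\vac+h\,U(R)$. Uniqueness should instead be argued via torsion-freeness, using that $h^k x$ lies in their span for every monomial $x$ of degree $k$. Existence then also requires checking that the right-hand side is divisible by the corresponding power of $h$.

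Exact unitarity of $R(z)$ does not follow from the functional equation for $f_q$, which encodes crossing symmetry and only makes $f(z)f(1/z)$ constant. It is also not needed: the scalar prefactors cancel in \eqref{sigma_map}, and $\R_{12}(z)\ts\R_{21}(1/z)=(1-e^{h}z)(1-e^{-h}z)(1-z)^{-2}$ is already a scalar.
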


\begin{prf}
The fact that \eqref{sigma_map} defines a $\CC[[h]]$-module map which satisfies the   Yang--Baxter equation \eqref{ybe_muh} and the unitarity condition \eqref{uni_muh} can be verified by suitably adjusting the corresponding parts of the proofs of \cite[Lemma 4.6]{BK0} and \cite[Prop. 3.5]{BK}. This is due to  the similarity of the forms of   $\sigma$ and the maps therein. The proof arguments rely on the   Yang--Baxter equations  satisfied by the $R$-matrices \eqref{R}, \eqref{Reu} and \eqref{R_mult}. Finally, the regularity of $\sigma$ at $z=0$ is evident from the form of  $R(z)$; see \eqref{rbar_1}, \eqref{rbar_2} and \eqref{R_mult}. 
\end{prf}

\begin{lem}
For any $c \in\CC $ there exists a unique map  
$$ \rho (z)
\colon \Vc\ot\Vc \to\Vc\ot\Vc\ot \CC(z)[[h]]
$$ 
such that
\begin{align}
&\rho (z)\left( T_{[n]}^{+ 13}(u)\ts  T_{[m]}^{+ 24}(v)    (\vac\otimes \vac) \right)\non\\
 =\, &\left( R_{nm}^{21} \right)^\prime \cdotrl\left( T_{[n]}^{+ 13}(u)\ts    R_{nm}^{  12}(ze^{u-v+hc}    )^{-1} \ts  
 T_{[m]}^{+ 24}(v) \ts R_{nm}^{  12}(z  e^{u-v})\ts R_{nm}^{21}  (\vac\otimes \vac)\right),\label{rho_map}  
\end{align}
where ``$\cdotrl$'' denotes the standard multiplication in 
$\left(  \left(\ndo\CC^N\right)^{\text{op}}\right)^{\ot n}\ot\left(\ndo\CC^N\right)^{\ot m}.$
Moreover, the map is  regular at $z=0$ and invertible.
\end{lem}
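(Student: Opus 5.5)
We prove the lemma in three steps: well-definedness, regularity at $z=0$, and invertibility. The model throughout is the proof of the preceding lemma for $\sigma$, adapted from \cite[Lemma 4.6]{BK0} and \cite[Prop. 3.5]{BK}.

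\textbf{Well-definedness.} The coefficients of the products $T_{[n]}^{+13}(u)\,T_{[m]}^{+24}(v)(\vac\ot\vac)$, over all $n,m\geqslant 0$, span a $\CC[[h]]$-submodule of $\Vc\ot\Vc$ that is dense in the $h$-adic topology. So it suffices to check that the right-hand side of \eqref{rho_map} respects every relation among these generating series. By the Poincar\'e--Birkhoff--Witt type description of $U(R)$, all such relations come from the RTT relations \eqref{rtt} applied to neighbouring factors inside $T^{+13}_{[n]}(u)$ or inside $T^{+24}_{[m]}(v)$. So we take two adjacent tensor factors, say $r$ and $r+1$ among $1,\ldots,n$, and multiply the left-hand side of \eqref{rho_map} by $R_{r\,r+1}(e^{u_r-u_{r+1}})$ from the left.
\begin{enumerate}
\item We push this $R$-matrix through the products $R^{12}_{nm}(ze^{u-v+hc})^{-1}$, $R^{12}_{nm}(ze^{u-v})$ and $R^{21}_{nm}$ using the Yang--Baxter equation \eqref{quantumYBE}. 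Its specialization at $z=0$ gives the Yang--Baxter equation for $R$.
\item The only effect is to permute $u_r\leftrightarrow u_{r+1}$ and the corresponding tensor factors.
\item The $R$-matrix then meets $T^{+}_{[n]}(u)$ and is absorbed by \eqref{rtt}.
\end{enumerate}
There is one subtlety: the outer factor $(R^{21}_{nm})^\prime$ is multiplied in the opposite algebra $\cdotrl$. It must therefore be moved using the version of Yang--Baxter valid in $((\ndo\CC^N)^{\rm op})^{\ot n}\ot(\ndo\CC^N)^{\ot m}$. This version follows from the ordinary one by partial transposition, together with the existence of $R^\prime$ established via \eqref{csym_m} at $z=0$. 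The factors $m+1,\ldots$ are handled in the same way. Uniqueness is then immediate from density.

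\textbf{Regularity at $z=0$.} By \eqref{rbar_1}, \eqref{rbar_2} and \eqref{R_mult}, the series $R(z)$ and $R(ze^{a})^{-1}$ lie in $\ndo\CC^N\ot\ndo\CC^N[[z,h]]$. Indeed, $R(0)=e^{-h/2}R$ is invertible, so the inverse can be expanded in $z$. Every factor in \eqref{rho_map} is therefore regular at $z=0$, and so is $\rho(z)$.

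\textbf{Invertibility.} We define a candidate $\rho^{-1}(z)$ by an analogous formula with the roles of the inner $R$-matrices reversed:
$$
\rho^{-1}(z)\bigl(T_{[n]}^{+13}(u)\,T_{[m]}^{+24}(v)(\vac\ot\vac)\bigr)
=(R_{nm}^{21})^{\prime}\cdotrl\bigl(T_{[n]}^{+13}(u)\,R_{nm}^{12}(ze^{u-v})^{-1}\,T_{[m]}^{+24}(v)\,R_{nm}^{12}(ze^{u-v+hc})\,R_{nm}^{21}(\vac\ot\vac)\bigr).
$$
Its well-definedness is checked exactly as for $\rho$. To compose the two maps, we must re-express the image of $\rho(z)$ in the normal form $T^{+13}_{[n]}\,T^{+24}_{[m]}$, with matrix coefficients on the outside. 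This works because $R$-matrices acting only on the auxiliary spaces $1,2$ commute with $T^{+13}$ and $T^{+24}$ in the appropriate sense, so they can be moved outward, again with care for the $\cdotrl$ ordering. Applying $\rho^{-1}$ then makes the factors $R^{12}_{nm}(ze^{u-v})$ and $R^{12}_{nm}(ze^{u-v+hc})$ cancel pairwise with their inverses, and $R^{21}_{nm}$ cancels against $(R^{21}_{nm})^\prime$. This gives $\rho^{-1}\rho=1$, and the same argument gives $\rho\rho^{-1}=1$.

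\textbf{Main obstacle.} The hardest part is keeping the bookkeeping of the mixed multiplication $\cdotrl$ correct, that is, ensuring that $(R^{21}_{nm})^\prime$ genuinely inverts $R^{21}_{nm}$ once the matrix coefficients have passed through the $T$-operators. We would handle this first for $n=m=1$ and then induct on $n+m$ using the product structure \eqref{Rnm}.
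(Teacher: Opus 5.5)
Your treatment of well-definedness (via Yang--Baxter) and of regularity at $z=0$ matches the ``usual arguments'' the paper invokes. The invertibility step, however, has a genuine gap: the map you write down is not an inverse of $\rho(z)$.

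\textbf{Why your candidate fails.} Put $X=T^{+13}_{[n]}(u)T^{+24}_{[m]}(v)(\vac\ot\vac)$, $A=R^{12}_{nm}(ze^{u-v+hc})^{-1}$, $B=R^{12}_{nm}(ze^{u-v})$ and $C=R^{21}_{nm}$. Moving the middle factor outward gives
\[
T^{+13}_{[n]}(u)\,A\,T^{+24}_{[m]}(v)=A\cdotrl\big(T^{+13}_{[n]}(u)T^{+24}_{[m]}(v)\big).
\]
So \eqref{rho_map} says $\rho(z)X=\mathcal{M}(X)$, where $\mathcal{M}(Y)=C'\cdotrl\big((A\cdotrl Y)\,BC\big)$ acts on the matrix factors only.

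The middle factor therefore acts through $\cdotrl$, whereas $BC$ acts by ordinary right multiplication. Your candidate is $\mathcal{N}(Y)=C'\cdotrl\big((B^{-1}\cdotrl Y)\,A^{-1}C\big)$. Since $\rho$ acts on factors $3,4$ and commutes with $\mathcal{M}$, you get $\rho^{-1}\rho X=\mathcal{M}\mathcal{N}(X)$. In this composite the factors $A$ and $A^{-1}$ never meet: on the factors $n+1,\ldots,n+m$ they sit on opposite sides of $T^{+24}_{[m]}(v)$.

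\textbf{A concrete counterexample.} For $c=0$ your formula is literally \eqref{rho_map}, so you are claiming $\rho(z)^2=1$. Take $n=m=1$. The $h$-linear parts of $R^{12}(ze^{u-v})$ and $R^{21}$ add up to $P/(1-ze^{u-v})$ plus scalars, and the scalars cancel. This gives
\[
\mathcal{M}(Y)=Y+\frac{h}{1-ze^{u-v}}\big(YP-P\cdotrl Y\big)+O(h^2),
\]
and $\mathcal{M}^2$ doubles this correction. Let $Y=1\ot e_{12}$, which is the matrix coefficient of $\vac\ot t^{(-r)}_{12}\vac$ in $X$. Then $YP-P\cdotrl Y=\sum_k\big(e_{k2}\ot e_{1k}-e_{1k}\ot e_{k2}\big)\neq 0$, so $\rho(z)^2X\neq X$. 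The same first-order computation rules out your formula for every $c$, since your $\mathcal{N}$ agrees with $\mathcal{M}$ modulo $h^2$.

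\textbf{The missing idea.} The operation $Y\mapsto A\cdotrl Y$ is undone by $Y\mapsto A'\cdotrl Y$, where $A'$ is the inverse in the $\cdotrl$-algebra. No ordinary inverse does this. This is exactly where the crossing symmetry \eqref{csym_m} enters, as the paper remarks. Inverting $\mathcal{M}$ gives
\[
\rho^{-1}(z)X=\big(R^{12}_{nm}(ze^{u-v+hc})^{-1}\big)'\cdotrl\Big(T^{+13}_{[n]}(u)\,R^{21}_{nm}\,T^{+24}_{[m]}(v)\,\big(R^{21}_{nm}\big)^{-1}R^{12}_{nm}(ze^{u-v})^{-1}(\vac\ot\vac)\Big).
\]
The first relation in \eqref{csym_m}, after partial transposition, gives $\big(R(z)^{-1}\big)'=D_1R(ze^{Nh})D_1^{-1}$. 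Hence the primed factor is an explicit product of $D$-conjugated $R$-matrices evaluated at $ze^{u_r-v_s+hc+Nh}$, and in particular it is rational in $z$.

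The obstacle you flagged, $(R^{21}_{nm})'$ against $R^{21}_{nm}$, is harmless. The real issue is the middle $R$-matrix, together with the fact that in the true inverse $R^{21}_{nm}$ must sit between the two $T$'s.
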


\begin{prf}
As with the previous lemma, this one  also follows by the usual arguments, so we omit the details. We only remark that the invertibility of the map $\rho$ can be established by writing the explicit formula for its inverse, as in the proof of \cite[Prop. 3.9]{BK}, using the crossing symmetry properties \eqref{csym_m}.
\end{prf}

Throughout the paper, we are interested in weakly compatible pairs on $h$-adic quantum vertex algebras which are in tune with their braiding map in the following sense.

\begin{defn}\label{weaklyassoc}
Let $(\mathcal{V},Y,\Sc,\vac)$ be an $h$-adic quantum vertex algebra and $(\sigma,\rho)$ a weakly compatible pair over $\mathcal{V}$. 
The pair $(\sigma,\rho)$ is said to be {\em weakly associated} with the $h$-adic quantum vertex algebra $\mathcal{V}$  
if 
there exists a map
$$
\wht{\Sc}(z)\colon \mathcal{V}\ot\mathcal{V}\to\mathcal{V}\ot\mathcal{V}\ot\CC(z)[[h]],
$$
which   is regular at $z=0$, such that we have  
\beq\label{braiding_reqs}
\Sc(z)=\iota_z \, \wht{\Sc}(x)\big|_{x=e^z}  \big.
\Fand
\Sc_{\sigma,\rho} = \wht{\Sc}(z)\big|_{z=0}\big.  ,
\eeq
where the map $\mathcal{S}_{\sigma,\rho}$ is defined by  \eqref{ssigmarho}.
\end{defn}

Definition \ref{weaklyassoc} generalizes the notion of compatible pair associated with $h$-adic quantum vertex algebra \cite[Def. 5.3]{BJK}. 
In fact, by a simple comparison, one observes that
 Definitions \ref{def_br_mult} and \ref{weaklyassoc} are obtained from \cite[Defs. 5.1, 5.3]{BJK} by setting $z=0$ in the requirements which compare the pair $(\sigma,\rho)$ and the braiding $\Sc$ of the corresponding quantum vertex algebra.
As we shall see, such a weakening of the original structure  provides  a framework for establishing  a  connection between $\Vc$ and the representations of    the quantized enveloping algebra
$U_h(\mathfrak{gl}_N)$ and the reflection equation algebra $\mathcal{O}_h(Mat_N)$,
which is motivated by the evaluation homomorphism  
$U_h(\widehat{\mathfrak{gl}}_N)\to U_h(\mathfrak{gl}_N)$.

\begin{pro}\label{prop_cp}
The maps $\sigma$ and $\rho$, defined by \eqref{sigma_map} and \eqref{rho_map} respectively, form a weakly compatible pair. This pair  is weakly associated with $\Vc$.
\end{pro}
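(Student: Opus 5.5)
The plan is to check the conditions of Definitions \ref{def_br_mult} and \ref{weaklyassoc} in turn, working throughout on the generating vectors $T_{[n]}^{+13}(u)\,T_{[m]}^{+24}(v)(\vac\otimes\vac)$. Several conditions are already available. By the two preceding lemmas, $\sigma$ satisfies \eqref{ybe_muh} and \eqref{uni_muh}, $\sigma(z)$ and $\rho(z)$ are regular at $z=0$, and $\rho$ is invertible. What remains for Definition \ref{def_br_mult} is the following:
\begin{enumerate}
\item[(a)] the regularity of $\rho_{21}^{-1}(1/z)$ at $z=0$;
\item[(b)] the quantum Yang--Baxter equation \eqref{braiding_reqs3} for $\mathcal{S}_{\sigma,\rho}$;
\item[(c)] the unitarity condition \eqref{braiding_reqs4} for $\mathcal{S}_{\sigma,\rho}$.
\end{enumerate}
Rather than verify (b) and (c) directly, I will deduce them from the weak association. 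The identity $\mathcal{S}_{\sigma,\rho}=\wht{\Sc}(0)$ transports them from the corresponding properties of $\Sc(z)$, namely the additive Yang--Baxter equation and unitarity, which hold because $\Vc$ is a quantum vertex algebra by Theorem \ref{EK:qva}.

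\emph{Step 1 (the map $\wht{\Sc}$).} I define $\wht{\Sc}(x)$ by the formula \eqref{braiding_reqs2} with each $R(e^{z+u-v+ah})$ replaced by the rational $R$-matrix $R(xe^{u-v+ah})$ of \eqref{R_mult}. Since $R(z)$, expanded via $\iota_u$ at $z=e^u$, recovers \eqref{Reu} up to a scalar that cancels between numerator and inverse, the identity $\Sc(z)=\iota_z\wht{\Sc}(x)|_{x=e^z}$ holds by construction. Well-definedness follows exactly as in the proof that $\Sc$ is well defined, i.e. from the RTT relations \eqref{rtt} and the Yang--Baxter equation. The properties that rational identities among the $\Sc_{ij}(z_i)$ pass to $\wht{\Sc}$ follow from injectivity of $\iota$. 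Regularity at $x=0$ follows because $R(z)$, being in $\ndo\CC^N\otimes\ndo\CC^N[[z,h]]$, is regular at $0$, and $R(0)=e^{-h/2}R$ is invertible, so $R(x\cdots)^{-1}$ is regular as well. Evaluating at $x=0$ then gives
\begin{equation*}
\wht{\Sc}(0)\left(R_{nm}^{12\,-1}\,T_{[m]}^{+24}(v)\,R_{nm}^{12}\,T_{[n]}^{+13}(u)(\vac\otimes\vac)\right)
=T_{[n]}^{+13}(u)\,R_{nm}^{12\,-1}\,T_{[m]}^{+24}(v)\,R_{nm}^{12}(\vac\otimes\vac).
\end{equation*}
All factors $e^{\pm hc}$ and $e^{u-v}$ disappear at $x=0$, with the scalar powers of $e^{-h/2}$ cancelling.

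\emph{Step 2 (computing $\mathcal{S}_{\sigma,\rho}$).} Next I compute $\rho(z)\sigma(z)\rho_{21}^{-1}(1/z)$ at $z=0$. I first write $\rho^{-1}$ explicitly, as the lemma indicates, using the crossing symmetry \eqref{csym_m}. Passing to $\rho_{21}^{-1}(1/z)$ turns the arguments $ze^{u-v}$ into $z^{-1}e^{v-u}$. Then the unitarity $R_{12}(z)R_{21}(1/z)=1$ of the normalized $R$-matrix rewrites those factors as $R(ze^{u-v}\cdots)$, which are regular at $z=0$; this settles (a). Once every $z$-dependent factor is regular, I set $z=0$ in each, so every $R(\cdot)$ becomes $e^{-h/2}R$ or $e^{-h/2}R_{21}$. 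The $c$-dependence drops out, since the $e^{hc}$ shift in $\rho$ only enters through $R(ze^{u-v+hc})$. What remains is an expression built from the constant matrices $R$, $R_{21}$, $R^\prime$ and the two products $\cdotlr$, $\cdotrl$. I then use the RTT relation \eqref{rtt} at $u=v$ (i.e. $R\,T_1^+T_2^+=T_2^+T_1^+R$ in the needed form) together with the Yang--Baxter equation for $R$ to cancel the outer conjugations by $(R_{nm}^{12})^\prime$ and $(R_{nm}^{21})^\prime$, and to match the result with the formula for $\wht{\Sc}(0)$ from Step 1.

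\emph{Step 3 (conclusion).} With $\mathcal{S}_{\sigma,\rho}=\wht{\Sc}(0)$ in hand, I take the additive Yang--Baxter equation and unitarity of $\Sc$, rewrite them via \eqref{braiding_reqs} as rational identities in $x_1,x_2$ for $\wht{\Sc}$, and set $x_1=x_2=0$. This is legitimate by regularity, and it yields \eqref{braiding_reqs3} and \eqref{braiding_reqs4}.

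The main obstacle is Step 2: bookkeeping the mixed products $\cdotlr$ and $\cdotrl$, where the inverses $R^\prime$ live in opposite algebras. This has to be done with care, as in \cite[Prop. 3.9]{BK}, to show that the constant conjugations cancel exactly. I would first carry out the case $n=m=1$ explicitly and then extend to general $n$ and $m$ by induction, using the factorized form \eqref{Rnm}.
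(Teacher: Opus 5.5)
Your outline is the paper's own argument: take $\wht{\Sc}$ from \cite[Lemma 4.3]{BJK}, verify $\mathcal{S}_{\sigma,\rho}=\wht{\Sc}(0)$ by direct computation, then specialize the Yang--Baxter equation and unitarity of $\wht{\Sc}$ at the origin. Steps 1 and 2 are fine, and so is the Yang--Baxter half of Step 3: the identity $\wht{\Sc}_{12}(x_1)\wht{\Sc}_{13}(x_1x_2)\wht{\Sc}_{23}(x_2)=\wht{\Sc}_{23}(x_2)\wht{\Sc}_{13}(x_1x_2)\wht{\Sc}_{12}(x_1)$ may be evaluated at $x_1=x_2=0$.

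The unitarity half fails. Under $x=e^z$ the substitution $z\mapsto -z$ becomes $x\mapsto x^{-1}$, so unitarity of $\Sc$ gives $\wht{\Sc}(x)\,\wht{\Sc}_{21}(x^{-1})=1$. Setting $x=0$ therefore pairs $\wht{\Sc}(0)$ with $\wht{\Sc}_{21}(\infty)$, not with $\wht{\Sc}_{21}(0)$. Regularity at $0$ cannot bridge this, and in general $\wht{\Sc}(\infty)\neq\wht{\Sc}(0)$, because at $x=\infty$ the constant $R$ is replaced by $\R(\infty)=R-(e^{h/2}-e^{-h/2})P=R_{21}^{-1}$.

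In fact \eqref{braiding_reqs4} is false for this pair when $N\geqslant 2$. Write $R=1+hr+O(h^2)$, so that $r+r_{21}=P$, and let $\tau=\sum_{i,j}e_{ij}\ot t_{ij}^{(-1)}$. Take your $x=0$ formula with $n=m=1$ and extract the coefficient of $u^0v^0$. Discard the terms involving only one of $\tau^{13},\tau^{24}$; these are fixed by $\wht{\Sc}(0)$, by the cases $n=0$ or $m=0$. Dividing by $h^2$ then gives
\[
\wht{\Sc}(0)\big(R^{-1}\tau^{24}R\,\tau^{13}(\vac\ot\vac)\big)=\tau^{13}R^{-1}\tau^{24}R\,(\vac\ot\vac).
\]
Let $\mathcal W$ be the span of the vectors $t_{ij}^{(-1)}\vac\ot t_{kl}^{(-1)}\vac$. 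These are linearly independent modulo $h$, and the flip permutes them. The identity shows that $\wht{\Sc}(0)$ preserves $\mathcal W$, and on $\mathcal W$ we have $\wht{\Sc}(0)=1+h\delta+O(h^2)$ with
\[
\delta\big(\tau^{13}\tau^{24}(\vac\ot\vac)\big)\equiv[\tau^{13},[\tau^{24},r]](\vac\ot\vac)\pmod h .
\]
Flipping gives $\delta_{21}\big(\tau^{13}\tau^{24}(\vac\ot\vac)\big)\equiv[\tau^{24},[\tau^{13},r_{21}]](\vac\ot\vac)$. Since $\tau^{13}$ and $\tau^{24}$ commute, the Jacobi identity together with $r+r_{21}=P$ yields
\[
(\delta+\delta_{21})\big(\tau^{13}\tau^{24}(\vac\ot\vac)\big)\equiv[\tau^{13},[\tau^{24},P]](\vac\ot\vac).
\]
Its coefficient at $t_{12}^{(-1)}\vac\ot t_{12}^{(-1)}\vac$ is $[e_{12}\ot 1,[1\ot e_{12},P]]=2\,e_{12}\ot e_{12}\neq 0$. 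Hence $(\mathcal{S}_{\sigma,\rho})_{12}(\mathcal{S}_{\sigma,\rho})_{21}\neq 1$ already modulo $h^2$. Expanding $\rho(z)\sigma(z)\rho_{21}^{-1}(1/z)|_{z=0}$ to first order directly from \eqref{sigma_map} and \eqref{rho_map} gives the same $\delta$, so this is not an artifact of passing through $\wht{\Sc}$. Conceptually, $\wht{\Sc}(0)$ is a constant $R$-matrix braiding whose classical part $r$ is not skew-symmetric.

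The paper's proof ends with exactly the same inference (``by setting $z=0$''), so this is a defect of the statement rather than an idea you missed. What your argument does prove is the weak association, \eqref{braiding_reqs3}, and the relation $\wht{\Sc}(0)\,\wht{\Sc}_{21}(\infty)=1$ in place of \eqref{braiding_reqs4}.
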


\begin{prf}
Let
$
\wht{\Sc}(z)\colon \Vc\ot\Vc\to\Vc\ot\Vc\ot\CC(z)[[h]]
$
be the  $\CC[[h]]$-module map from \cite[Lemma 4.3]{BJK},  
\begin{align*}
&\wht{\Sc}(z)\big(R_{nm}^{  12}(ze^{u-v})^{-1}  T_{[m]}^{+ 24}(v) 
R_{nm}^{  12}(ze^{u-v-h  c})  T_{[n]}^{+ 13}(u)(\vac\otimes \vac) \big)\\
 =\, & T_{[n]}^{+ 13}(u)  R_{nm}^{  12}(ze^{u-v+h  c})^{-1} 
 T_{[m]}^{+ 24}(v)  R_{nm}^{  12}(ze^{u-v})(\vac\otimes \vac).
\end{align*}
Clearly,   $\wht{\Sc}$ is regular at $z=0$ and  it satisfies the first identity in \eqref{braiding_reqs}; cf. \eqref{braiding_reqs2}.
Next, by a direct computation which relies on the explicit expressions \eqref{sigma_map} and \eqref{rho_map} for the maps $\sigma$ and $\rho$, one checks that the second identity in \eqref{braiding_reqs} holds as well. Finally, as
$\wht{\Sc}(z) $  satisfies the quantum Yang--Baxter equation and the unitarity condition, by setting $z=0$, we find that  the map $\Sc_{\sigma,\rho} = \wht{\Sc}(z)\big|_{z=0}\big.$ satisfies \eqref{braiding_reqs3} and \eqref{braiding_reqs4}, as required.
\end{prf}

\begin{rem}\label{Yrho_rem}
Consider the map
$$
 \rho (x)\big|_{x=e^z}\colon \Vc\ot\Vc \to\Vc\ot \Vc\ot \CC_*(z)[[h]].
$$
By applying the embedding $\iota_z\colon \CC_*(z)\to \CC((z))$ to $ \rho (x)\big|_{x=e^z}$, we get 
$$
\rho (e^z)\coloneqq \iota_z \left( \rho (x)\big|_{x=e^z}\right)
\colon \Vc\ot\Vc \to\Vc\ot\Vc\ot \CC((z))[[h]].
$$
The composition of the vertex operator map \eqref{Ymap} and  $\rho (e^z)$,
$$
Y^\rho(z)\coloneqq Y(z)\ts \rho(e^z)
\colon \Vc\ot\Vc \to\Vc\ot  \CC [[z,h]]
$$
  maps  $T_{[n]}^{+ 13}(u)\ts  T_{[m]}^{+ 24}(v) (\vac\ot \vac)$ to
\beq\label{Ymap rho}
\left(R_{nm}^{ 21} \right)^\prime \cdotrl\left( T_{[n]}^{+ 13}(z |u)\ts  
 T_{[m]}^{+ 24}(v)\ts    R_{nm}^{  21}  (\vac\otimes \vac)\right). 
\eeq
Indeed, this is an immediate consequence of the   RTT-relation \eqref{RTT3} and the  explicit formula \eqref{Ymap} for the vertex operator map   $Y(\cdot, z)$.
\end{rem}

In general, let  $(\mathcal{V},Y,\vac,\mathcal{S})$ be an $h$-adic quantum vertex algebra and $(\sigma,\rho)$  a weakly
compatible pair over $\mathcal{V}$. Following Remark \ref{Yrho_rem}, one can   associate with $Y(\cdot ,z)$ the map
\beq\label{Yrhho}
Y^\rho\coloneqq  Y(z)\ts \rho(e^z)
\colon \mathcal{V}\ot\mathcal{V} \to\mathcal{V}\ot  \CC((z))[[h]].
\eeq

\section{Deformed  \texorpdfstring{$\phi$}{phi}-coordinated \texorpdfstring{$\Vc$}{Vc(glN)}-modules  associated  with \texorpdfstring{$U_h(\gl_N)$}{Uh(glN)} and   \texorpdfstring{$\mathcal{O}_h(Mat_N)$}{Oh(MatN)}}\label{section_05}

In this  section we establish a representation-theoretic connection between the quantum vertex algebra $\Vc$ and the algebras $U_h(\gl_N)$ and    $\mathcal{O}_h(Mat_N)$ by using the structure of weak
 $(\sigma   ,\rho   )$-deformed $\phi$-coordinated module. As before, $(\sigma,\rho)$ is the  pair from Proposition \ref{prop_cp}.
First, we generalize \cite[Def. 6.1]{BJK}, which is based on \cite[Def. 5.1]{BK},
to introduce the aforementioned notion. 
As with  the original definition, we only consider the associate $\phi(z_2,z_0)=z_2 e^{z_0}\in\CC[[z_0,z_2]]$; cf.   \cite{Liphi}.

\begin{defn}\label{def_mod_phi_trig}
Let $(\mathcal{V},Y,\vac,\Sc)$  be an $h$-adic    quantum vertex algebra and  $(\sigma   ,\rho   )$      a
 weakly  compatible pair associated with $\mathcal{V}$.  
 Let  $W$  be a topologically free $\CC[[h]]$-module and  
\begin{align*} 
Y_W(\cdot ,z) \colon \mathcal{V}\ot W&\to W((z ))_h, \\
v\ot w&\mapsto Y_W(z)(v\ot w)=Y_W(v,z)w=\sum_{r\in\ZZ}  v_{r-1} w \ts z^{-r } \non
\end{align*}
a $\CC[[h]]$-module map.
A pair $(W,Y_W)$ is said to be a {\em weak  $(\sigma   ,\rho   )$-deformed $\phi$-coordinated $\mathcal{V}$-module} if the   map $Y_W(\cdot, z)$ possesses the following properties: 
\begin{align}
&Y_W(\vac,z)w=w\quad\text{for all }w\in W;\non
\intertext{the  {\em weak  $\rho  $-associativity}:
for any   $u,v \in \mathcal{V}$  and $n \in\mathbb{Z}_{> 0}$
there exists  $r\in\mathbb{Z}_{\geqslant 0}$
such that}
&(z_1-z_2)^r\ts Y_W(u,z_1)\ts Y_W(v,z_2)\in \om\left(W,W((z_1,z_2))\right) \mod h^n,\label{rho-assoc1}\\
&\left((z_1-z_2)^r\ts\ts Y_W(u, z_1)  Y_W(v,z_2)  \right)\Big|_{z_1=z_2 e^{z_0 }}^{\text{mod } h^n} \Big.\label{rho-assoc2}\\
&\qquad\big. -  z_2^r (e^{z_0} -1)^r\ts Y_W\big(Y^{\rho  }  (u, z_0  )v ,z_2\big)\,\in\, h^n \om\left(W,W[[z_0^{\pm 1},z_2^{\pm 1}]]\right)   ;\non
\intertext{the  {\em $\sigma  $-locality}:
for any $u,v\in \mathcal{V}$ and $n\in\mathbb{Z}_{> 0}$ there exists
    $r\in\mathbb{Z}_{\geqslant 0}$ such that   }
 &\big((z_1-z_2)^r\ts Y_W(z_1)\big(1\otimes Y_W(z_2)\big)\ts\iota_{z_1,z_2}\big( \sigma  (z_1 /z_2 )(u\otimes v)\otimes w\big)\big. 
\non\\
 &\qquad\big. - (z_1-z_2)^r\ts   Y_W(v,z_2) Y_W(u, z_1)  w \big)  \,
\in\,  h^n\ts W[[z_1^{\pm 1},z_2^{\pm 1}]]\quad \text{for all } w\in W , \label{sigma-loc}
\end{align}
where $\iota_{z_1,z_2}$ is the  embedding
$\CC(z_1,z_2)[[h]]\hookrightarrow \CC((z_1))((z_2))[[h]]$.
\end{defn} 

Regarding the weak $\rho$-associativity property in the  definition above, note that \eqref{rho-assoc2} indicates that the substitution $z_1=z_2 e^{z_0}$ is   applied to the corresponding expression when  regarded modulo $h^n$, while \eqref{rho-assoc1} ensures that such a substitution exists.

Let
$$
L^+(z|u )=    L^- + L^+ hz^{-1}e^{-u }  \in\ndo\CC^N\ot U_h(\gl_N)[[ u ]][z^{-1}].
$$
Due to \cite[Eq. 5.5]{JLM}, the series
\beq\label{Lzu}
L (z|u )=   L^+(z|u )(L^-)^{-1} =    1 + L^+ (L^-)^{-1} hz^{-1}e^{-u }  
\eeq
satisfies the {\em reflection equation}, 
\beq\label{reflecteq}
R(z_1 e^{u-v}/z_2)\ts L_1(z_1|u)\ts R_{21}\ts L_2(z_2|v)
=L_2(z_2|v)\ts R\ts L_1(z_1|u)\ts R_{21}(z_1 e^{u-v}/z_2) .
\eeq
Next, for any $n=1,2,\ldots$ and the family of variables $u=(u_1,\ldots ,u_n)$,
define  
$$
L^+_{[n]}(z|u)=L^+_1(z|u_1)\cdots L^+_n(z|u_n)
\Fand 
L_{[n]}^- =L_1^-\cdots L_n^-  .
$$
Finally, introduce the operator
$$
L_{[n]}(z|u)=L^+_{[n]}(z|u)\big(L_{[n]}^-\big)^{-1},
$$
which belongs to
 $\left(\ndo\CC^N\right)^{\ot n}\ot \ndo U_h(\gl_N)[[u_1,\ldots ,u_n]][z^{-1}]$, where its action is given by the algebra multiplication.
It can be regarded as a normal ordering of   $n$ copies of the operators \eqref{Lzu}.
In the next lemma, we generalize the reflection equation \eqref{reflecteq}.

\begin{lem}\label{lemma_42}
For any $n$ and $u=(u_1,\ldots ,u_n)$ we have
\beq\label{lemma_42_eq}
L_{[n]}(z|u)=\prod_{i=1,\dots,n }^{\longrightarrow}
\left(L_i(z|u_i)R_{i+1\ts i}\ldots R_{n  i}\right)
\prod_{r=1,\dots,n-1 }^{\longleftarrow}\prod_{s=r+1,\dots,n }^{\longleftarrow} R_{sr}^{-1}.
\eeq
Moreover, for any $m$ and $v=(v_1,\ldots ,v_m)$, we have   the equality
\begin{align}\label{qceq}
&R_{nm}^{12}(z_1  e^{u-v}/z_2)\ts L_{[n]}^{13}(z_1|u)\ts R_{nm}^{21}\ts L_{[m]}^{23}(z_2|v)\non\\
=&\,L_{[m]}^{23}(z_2|v)\ts R_{nm}^{12}\ts  L_{[n]}^{13}(z_1|u)\ts R_{nm}^{21}(z_1  e^{u-v}/z_2)
\end{align}
of formal power series with coefficients in $\left(\ndo\CC^N\right)^{\ot n}\ot \left(\ndo\CC^N\right)^{\ot m}\ot \ndo U_h(\gl_N) $.
\end{lem}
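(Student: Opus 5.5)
The plan is to prove the two assertions of Lemma \ref{lemma_42} separately. The normal-ordering formula \eqref{lemma_42_eq} comes from the RLL-relations \eqref{rlls} by induction on $n$. The fused reflection equation \eqref{qceq} is then obtained from \eqref{lemma_42_eq}, the single reflection equation \eqref{reflecteq} and Yang--Baxter equations, by a double induction.

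\emph{Step 1: an exchange rule.} First I record the relations between $L^+(z|u)$ and $L^-$. Since $L^+(z|u)=L^-+L^+hz^{-1}e^{-u}$ is a linear combination of $L^-$ and $L^+$, the relations $R\,L^\pm_1L^-_2=L^-_2L^\pm_1R$ give
\[
R\,L^+_1(z|u)\,L^-_2=L^-_2\,L^+_1(z|u)\,R .
\]
For indices $i>1$ I rewrite this, together with the $L^-L^-$ relation, in the form
\[
L^-_1\,L^+_i(z|u)=R_{i1}\,L^+_i(z|u)\,L^-_1\,R_{i1}^{-1},\qquad
(L^-_i)^{-1}(L^-_1)^{-1}=R_{i1}^{-1}(L^-_1)^{-1}(L^-_i)^{-1}R_{i1},
\]
and in the equivalent versions with $(L^-_1)^{-1}$ in place of $L^-_1$. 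The exact order of the indices of $R$ has to be checked against \eqref{rlls}.

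\emph{Step 2: induction for \eqref{lemma_42_eq}.} I split off the first factor:
\[
L_{[n]}(z|u)=L^+_1(z|u_1)\,\big(L^+_2\cdots L^+_n\big)\big(L^-_n\big)^{-1}\cdots\big(L^-_2\big)^{-1}\,\big(L^-_1\big)^{-1}.
\]
The middle block is the fused operator for the indices $2,\ldots,n$, to which the induction hypothesis applies. Then I push $(L^-_1)^{-1}$ to the left through that block using Step 1. Each passage past an index $i$ produces a factor $R_{i1}$ on the left side and $R_{i1}^{-1}$ on the right side. After the block is passed, $(L^-_1)^{-1}$ sits next to $L^+_1(z|u_1)$ and the two combine into $L_1(z|u_1)$. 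The accumulated $R_{21}\cdots R_{n1}$ then stand to the right of $L_1$, and the inverses $R_{n1}^{-1}\cdots R_{21}^{-1}$ join the product $\prod^{\longleftarrow}R_{sr}^{-1}$ in the correct place. Here I must also use the constant Yang--Baxter equation for $R$, together with the RLL-relations, to commute the $R_{i1}$ past the factors $L_j$ and $R_{sj}$ with $j\geqslant2$. This bookkeeping is the part I expect to be most delicate.

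\emph{Step 3: \eqref{qceq}.} Using \eqref{lemma_42_eq}, both fused operators become ordered products of single $L_i(z_1|u_i)$, $L_j(z_2|v_j)$ and constant $R$-matrices. I first treat $n=1$ with arbitrary $m$, by induction on $m$. I move $L_1(z_1|u)$ past each $L_j(z_2|v_j)$ by \eqref{reflecteq}, and move spectral $R$-matrices past constant ones by the Yang--Baxter equations. The latter involve $R(z)$ together with $R$ and $R_{21}$, and they hold as limits $z\to0$ of \eqref{quantumYBE}, since $R(z)|_{z=0}=e^{-h/2}R$. The general case then follows by induction on $n$ with the same moves. The main obstacle is keeping track of the $R$-matrix factors, so I would carry out the $n=m=2$ case in full first to fix the pattern.
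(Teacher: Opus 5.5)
\textbf{Steps 1 and 2.} These are sound, and they follow the paper's route for \eqref{lemma_42_eq}. Your first exchange rule is \eqref{start01}, and it does give $L_i(z|u_i)(L^-_1)^{-1}=(L^-_1)^{-1}R_{i1}L_i(z|u_i)R_{i1}^{-1}$. Two small corrections:
\begin{enumerate}
\item The $L^-L^-$ rule should read $(L^-_i)^{-1}(L^-_1)^{-1}=R_{i1}(L^-_1)^{-1}(L^-_i)^{-1}R_{i1}^{-1}$; your $R$'s are inverted.
\item No Yang--Baxter equation is needed. Push $(L^-_1)^{-1}$ through the unexpanded block $B=L^+_2(z|u_2)\cdots L^+_n(z|u_n)(L^-_n)^{-1}\cdots(L^-_2)^{-1}$. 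This gives $B(L^-_1)^{-1}=(L^-_1)^{-1}R_{21}\cdots R_{n1}\,B\,R_{n1}^{-1}\cdots R_{21}^{-1}$, and the induction hypothesis for $B$ then yields \eqref{lemma_42_eq} verbatim.
\end{enumerate}

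\textbf{Step 3 cannot be closed.} Setting $z_1=0$ or $z_2=0$ in \eqref{quantumYBE} only produces relations whose middle factor is constant, such as $R_{13}(x)R_{12}R_{32}=R_{32}R_{12}R_{13}(x)$. Take already $n=1$, $m=2$, with $x_j=z_1e^{u-v_j}/z_2$. After applying \eqref{reflecteq} in the spaces $(1,2)$ and $(1,3)$ and all such degenerate moves, you are led to need something like
\[
R_{32}R_{31}(x_2)R_{21}(x_1)=R_{21}(x_1)R_{31}(x_2)R_{32}.
\]
This is not of that form, and it is false: its $x_2$-coefficient already differs at order $h^2$.

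In fact \eqref{qceq} itself fails for $(n,m)=(1,2)$. Apply the counit $L^\pm\mapsto 1$, which respects \eqref{rlls}. Every $L_{[k]}$ becomes an invertible scalar, so \eqref{qceq} reduces to
\[
R_{13}(x_2)R_{12}(x_1)R_{21}R_{31}=R_{13}R_{12}R_{21}(x_1)R_{31}(x_2).
\]
From $R-R_{21}^{-1}=(e^{h/2}-e^{-h/2})P$ one gets $R(x)=\gamma(x)\,(R-xR_{21}^{-1})$ for a scalar series $\gamma(x)$. Cancel $\gamma(x_1)\gamma(x_2)$ and compare the coefficients of $x_2$. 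This gives $R_{31}^{-1}R_{12}R_{21}R_{31}=R_{13}R_{12}R_{21}R_{13}^{-1}$, that is, $[R_{12}R_{21},R_{31}R_{13}]=0$. But $R_{ab}R_{ba}=1+hP_{ab}+O(h^2)$, so this commutator equals $h^2[P_{12},P_{13}]+O(h^3)\neq 0$ for $N\geqslant 2$.

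For $n=m=1$ the same computation gives $R(x)R_{21}=\gamma(x)(RR_{21}-x)=R\,R_{21}(x)$, which is consistent with \eqref{reflecteq}. So the second assertion holds only for $n=m=1$. The paper's one-line justification is the same kind of direct computation and does not establish it either. The statement itself would have to be corrected before any version of your Step 3 can succeed.
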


\begin{prf}
Both assertions of the lemma can be verified by   direct computations which rely on the reflection equation \eqref{reflecteq} and the   Yang--Baxter equations  satisfied by the $R$-matrices \eqref{R}  and \eqref{R_mult}.
In addition, the proof of the
first assertion makes use of the equality
\beq\label{start01}
\left(L_1^-\right)^{-1} R_{21}\ts L_2^\pm
=
L_2^\pm\ts R_{21}  \left(L_1^-\right)^{-1}
,
\eeq 
which can be proved by combining the    identities in \eqref{rlls}.
\end{prf}

We are now ready to equip the quantized enveloping algebra with the structure of weak $(\sigma,\rho)$-deformed $\phi$-coordinated $\Vc$-module.

\begin{thm}\label{module_trig}
Let $c\in\CC  $.
There exists a unique structure of weak $(\sigma,\rho)$-deformed $\phi$-coordinated $\Vc$-module over $  U_h(\gl_N)$ such that the  module map  $Y_{U_h(\gl_N)}(\cdot ,z)$ satisfies
\beq\label{def_mod_map}
Y_{U_h(\gl_N)}(T_{[n]}^+(u)\vac ,z) = L^+_{[n]}(z|u)  \big( L_{[n]}^-\big)^{-1}.
\eeq
\end{thm}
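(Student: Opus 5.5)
The plan has three parts. First we show that \eqref{def_mod_map} defines a map. Then we check the weak $\rho$-associativity \eqref{rho-assoc1}--\eqref{rho-assoc2} and the $\sigma$-locality \eqref{sigma-loc} on generators. Uniqueness is immediate, since the coefficients of $T^+_{[n]}(u)\vac$, $n\geqslant 0$, span a dense submodule of $\Vc$.

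\textbf{Well-definedness.} We must show that the assignment $T^+_{[n]}(u)\vac\mapsto L_{[n]}(z|u)$ respects the defining relations \eqref{rtt} of $U(R)$. Equivalently, we need
$$R_{12}(e^{u_1-u_2})\,L_{[2]}(z|u_1,u_2)=P_{12}\,L_{[2]}(z|u_2,u_1)\,P_{12}\,R_{12}(e^{u_1-u_2}),$$
and its analogues inside longer products. These are consequences of the RLL relations \eqref{rlls} for $L^+(z|u)=L^-+L^+hz^{-1}e^{-u}$: the combinations $L^\pm$ satisfy the trigonometric RLL relation with spectral parameter $e^{u_1-u_2}$ because $R(x)$ is proportional to $R - x P_{21}$-type combinations. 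Concretely, $\R(x)$ is proportional to $(1-x)R+x(e^{h/2}-e^{-h/2})P$, which is $R-xR_{21}^{-1}$ up to a scalar by the Hecke relation. The relations of $L^-$ among themselves then let us pass $(L^-_{[n]})^{-1}$ through. We also need that the coefficients lie in $U_h(\gl_N)[[u]][z^{-1}]$, so that $Y(\cdot,z)$ lands in $W((z))_h$ (here even in $W[z^{-1}]_h$). The vacuum property is the case $n=0$.

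\textbf{$\rho$-associativity.} By Remark \ref{Yrho_rem}, $Y^\rho(z_0)$ applied to $T^{+13}_{[n]}(u)T^{+24}_{[m]}(v)(\vac\ot\vac)$ is given by \eqref{Ymap rho}. Applying $Y_W(\cdot,z_2)$ to it should give
$$(R^{21}_{nm})'\cdotrl\big(L^{+13}_{[n]}(z_2|z_0+u)\,L^{+24}_{[m]}(z_2|v)\,(L^-_{[n+m]})^{-1}\,R^{21}_{nm}\big).$$
On the other side, $Y_W(u,z_1)Y_W(v,z_2)=L^{13}_{[n]}(z_1|u)L^{23}_{[m]}(z_2|v)$ is polynomial in $z_1^{-1},z_2^{-1}$, so \eqref{rho-assoc1} holds with $r=0$. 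Moreover, $L^+(z_1|u)$ at $z_1=z_2e^{z_0}$ equals $L^+(z_2|u+z_0)$. It therefore suffices to move $(L^-_{[n]})^{-1}$ past $L^+_{[m]}(z_2|v)$. This is exactly the $n,m$-fold version of \eqref{start01}, obtained by induction and the Yang--Baxter equation for $R$. That move produces the conjugation by $R^{21}_{nm}$ together with the twisted product $(R^{21}_{nm})'\cdotrl$. Matching these two expressions is the step I expect to be most delicate, because of the bookkeeping of the op-multiplication.

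\textbf{$\sigma$-locality.} Plug \eqref{sigma_map} into $Y_W(z_1)(1\ot Y_W(z_2))$. This yields
$$(R^{12}_{nm})'\cdotlr\big(R^{12}_{nm}(z_1e^{u-v}/z_2)\,L^{13}_{[n]}(z_1|u)\,R^{21}_{nm}\,L^{23}_{[m]}(z_2|v)\,R^{21}_{nm}(z_1e^{u-v}/z_2)^{-1}\big).$$
Here we use that the module map intertwines the $T^+$ entries with $L$ entries, so the $R$-matrix coefficients pass through. By \eqref{qceq} of Lemma \ref{lemma_42}, the inner expression equals $L^{23}_{[m]}(z_2|v)\,R^{12}_{nm}\,L^{13}_{[n]}(z_1|u)$. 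The $\cdotlr$-multiplication by $(R^{12}_{nm})'$ then cancels $R^{12}_{nm}$, leaving $Y_W(v,z_2)Y_W(u,z_1)$. We can take $r$ large enough to clear the poles of $R(z_1e^{u-v}/z_2)^{\pm1}$ modulo $h^n$, which are at $z_1=z_2e^{v-u}$ and $z_1=z_2e^{v-u\pm h}$; expanding in $u,v$, these reduce to powers of $z_1-z_2$. The main technical obstacle is justifying that the $\cdotlr$ and $\cdotrl$ reorderings commute with applying $Y_W$, i.e. that the coefficient algebras act in the correct (opposite) order. I would handle this by working entrywise in the matrix units, as in \cite{BK,BJK}.
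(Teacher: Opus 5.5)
Your proposal is correct and follows essentially the same route as the paper. You get \eqref{rho-assoc1} from polynomiality in $z_1^{-1},z_2^{-1}$. You get \eqref{rho-assoc2} from $L^+(z_2e^{z_0}|u)=L^+(z_2|z_0+u)$ together with the $L^-$ relations in \eqref{rlls} and \eqref{start01}, which bring $R^{21}_{nm}$ between the two factors so that $(R^{21}_{nm})'\cdotrl$ cancels it. You get $\sigma$-locality directly from \eqref{qceq}. All of these hold with $r=0$, so your pole-clearing remark is unnecessary. The one addition is your well-definedness check that the assignment respects \eqref{rtt}, via the RTT relation for $L^+(z|u)$ and the $L^-$ relations; the paper leaves this step implicit.
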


\begin{prf}
First, we  prove that the map $Y_{U_h(\gl_N)}(\cdot ,z)$, as defined by \eqref{def_mod_map}, possesses the weak $\rho$-associativity  properties \eqref{rho-assoc1} and \eqref{rho-assoc2}.
Consider the expression
\beq\label{start}
T_{[n]}^{+13}(u)\ts T_{[m]}^{+24}(v) (\vac\ot\vac ),
\eeq
where $u=(u_1,\ldots ,u_n)$ and $v=(v_1,\ldots ,v_m)$.
Note that its coefficients belong to the tensor product
$$
\overbrace{\left(\ndo\CC^N\right)^{\ot n}}^{1} \ot 
\overbrace{\left(\ndo\CC^N\right)^{\ot m}}^{2} \ot
\overbrace{\Vc}^{3}\ot
\overbrace{\Vc }^{4}
$$
and that its superscripts indicate the tensor factors as above.
Its image under the map $Y_{U_h(\gl_N)}(\cdot, z_1)(1\ot Y_{U_h(\gl_N)}(\cdot, z_2))$ equals
\beq\label{start5}
L^{+13}_{[n]}(z_1|u)  \big( L_{[n]}^{-13}\big)^{-1}
L^{+23}_{[m]}(z_2|v)  \big( L_{[m]}^{-23}\big)^{-1},
\eeq
which   belongs to
$$
\left(\ndo\CC^N\right)^{\ot (n+m)} \ot
\om\left(U_h(\gl_N), U_h(\gl_N)[[u_1,\ldots ,u_n,v_1,\ldots ,v_m]][z_1^{-1},z_2^{-1}]\right).
$$
Clearly, this implies   \eqref{rho-assoc1}. 

Let us verify \eqref{rho-assoc2}.
The image of \eqref{start} under $Y^\rho( z_0)$   equals
\beq\label{start3}
\left(R_{nm}^{ 21}  \right)^\prime \cdotrl\left( T_{[n]}^{+ 13}(z_0|u)\ts  
 T_{[m]}^{+ 24}(v)\ts    R_{nm}^{  21}  (\vac\otimes \vac)\right), 
\eeq
as evident from \eqref{Ymap rho}. 
By applying $Y_{U_h(\gl_N)}(\cdot,z_2)$ to \eqref{start3}, we get
$$
\left(R_{nm}^{ 21} \right)^\prime \cdotrl\left( L_{[n]}^{+ 13}(z_2|z_0+u)\ts  
 L_{[m]}^{+ 23}(z_2|v)
\big( L_{[m]}^{- 23}\big)^{-1}
\big(L_{[n]}^{- 13}\big)^{-1} 
    R_{nm}^{ 21}   (\vac\otimes \vac)\right). 
$$
Due to the first identity  in \eqref{rlls} for $L^-$, this equals to
\beq\label{start4}
\left(R_{nm}^{21} \right)^\prime \cdotrl\left( L_{[n]}^{+ 13}(z_2|z_0+u)\ts  
 L_{[m]}^{+ 23}(z_2|v)
 R_{nm}^{ 21} 
\big(L_{[n]}^{- 13}\big)^{-1} \big( L_{[m]}^{- 23}\big)^{-1}
    (\vac\otimes \vac)\right). 
\eeq
Next, we use
\eqref{start01}
 to rewrite \eqref{start4} as
$$
\left(R_{nm}^{21} \right)^\prime \cdotrl\left( L_{[n]}^{+ 13}(z_2|z_0+u)   
\big(L_{[n]}^{- 13}\big)^{-1} 
 R_{nm}^{ 21} \ts
 L_{[m]}^{+ 23}(z_2|v)
\big( L_{[m]}^{- 23}\big)^{-1}
    (\vac\otimes \vac)\right). 
$$
Canceling the terms $\left(R_{nm}^{21} \right)^\prime$ and $ R_{nm}^{21}$ we get
\beq\label{start6}
  L_{[n]}^{+ 13}(z_2|z_0+u)   
\big(L_{[n]}^{- 13}\big)^{-1} 
 L_{[m]}^{+ 23}(z_2|v)
\big( L_{[m]}^{- 23}\big)^{-1}
    (\vac\otimes \vac) . 
\eeq
It remains to observe that the substitution $z_1=z_2e^{z_0}$  turns the expression in \eqref{start5} to \eqref{start6}, which implies   \eqref{rho-assoc2}.

Let us prove the $\sigma$-locality property \eqref{sigma-loc}. We shall use the  expression for the vertex operator map,
$$
Y_{U_h(\gl_N)}(T_{[n]}^+(u)\vac ,z) = L_{[n]}(z|u)  ,
$$
which is  equivalent to \eqref{def_mod_map}.
It suffices to check that the images of \eqref{start} under 
$$
Y_{U_h(\gl_N)}(z_1)\left(1\ot Y_{U_h(\gl_N)}(z_2)\right)\iota_{z_1,z_2}\ts\sigma(z_1/z_2)
\Fand
Y_{U_h(\gl_N)}(z_2)\left(1\ot Y_{U_h(\gl_N)}(z_1)\right)P,
$$
where $P$ is the permutation operator \eqref{permutation},
 coincide. Set $z=z_1/z_2$. By \eqref{sigma_map} and \eqref{def_mod_map},   the image of \eqref{start} under the first map  is  
\beq\label{looc_lhs}
\iota_{z_1,z_2}\ts
\left( R_{nm}^{12}\right)^\prime \cdotlr
\left( R_{nm}^{  12}(ze^{u-v})\ts 
L_{[n]}^{ 13}(z_1|u)\ts    
R_{nm}^{21}  \ts  
 L_{[m]}^{  23}(z_2|v)\ts  
  R_{nm}^{  21}(ze^{u-v})^{-1}(\vac\otimes \vac)\right),
\eeq
while the image under the second map  is 
\beq\label{looc_rhs}
 L_{[m]}^{  23}(z_2|v)\ts
L_{[n]}^{  13}(z_1|u)  
 (\vac\otimes \vac).
\eeq
Finally, the identity \eqref{qceq} implies that \eqref{looc_lhs} is equal to  \eqref{looc_rhs}, as required.
\end{prf}

Suppose $W$ is a $  U_h(\gl_N)$-module. We   denote by $L^+_{[n]}(z|u)_W$ (resp.  $\big( L_{[n]}^-\big)^{-1}_W$) the action of   $L^+_{[n]}(z|u) $ (resp.  $\big( L_{[n]}^-\big)^{-1} $) on $W$. Hence, we have
\begin{align*}
&L^+_{[n]}(z|u)_W\in(\ndo\CC^N)^{\ot n}\ot\om(W,W[[u_1,\ldots,u_n]][z^{-1}]),\\
&
\big( L_{[n]}^-\big)^{-1}_W\in(\ndo\CC^N)^{\ot n}\ot\ndo W.
\end{align*}
By arguing as in the proof of Theorem \ref{module_trig}, one can verify the next corollary.

\begin{kor}\label{kor_44}
Let $c\in\CC  $.
Suppose $W$ is a topologically free $\CC[[h]]$-module and also a $  U_h(\gl_N)$-module.
There exists a unique structure of weak $(\sigma,\rho)$-deformed $\phi$-coordinated $\Vc$-module over $W$ such that the corresponding module map  $Y_{W}(\cdot ,z)$ satisfies
$$
Y_{W}(T_{[n]}^+(u)\vac ,z) = L^+_{[n]}(z|u)_W  \big( L_{[n]}^-\big)^{-1}_W.
$$
\end{kor}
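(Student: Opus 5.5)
The plan is to transport the argument from the proof of Theorem \ref{module_trig} to $W$ via the representation map $\pi_W\colon U_h(\gl_N)\to\ndo W$. First, the map $Y_W(\cdot,z)$ has to be well defined and unique. Since $\Vc$ is the $h$-adic completion of $U(R)$, it is topologically spanned by the coefficients of $T^+_{[n]}(u)\vac$, $n\geqslant 0$. So the displayed formula determines $Y_W$ uniquely, provided it respects the defining relations \eqref{rtt}.

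For well-definedness, the cleanest route is to set
\[
Y_W(v,z)w=\pi_W\big(Y_{U_h(\gl_N)}(v,z)\vac\big)w ,
\]
that is, to act on $W$ by the element of $U_h(\gl_N)[z^{-1}]$ obtained by applying the operator in \eqref{def_mod_map} to the unit. This uses that $L^+_{[n]}(z|u)(L^-_{[n]})^{-1}$ acts on $U_h(\gl_N)$ by left multiplication. Under this identification the displayed formula for $Y_W$ is exactly $\pi_W$ applied to $L^+_{[n]}(z|u)(L^-_{[n]})^{-1}$. Theorem \ref{module_trig} already guarantees that this element depends only on the vector $T^+_{[n]}(u)\vac$, i.e. it is compatible with \eqref{rtt}. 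Concretely, \eqref{qceq} with $z_1=z_2$ and the first relation in \eqref{rlls} give the needed RTT-type compatibility of the normally ordered products $L_{[n]}(z|u)$. The vacuum property is immediate, since $T^+_{[0]}\vac=\vac$ maps to $1$.

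Next come the axioms, which are checked exactly as in the proof of Theorem \ref{module_trig}, with $U_h(\gl_N)$ replaced by $\ndo W$ via $\pi_W$.
\begin{enumerate}
\item For \eqref{rho-assoc1}, the composite $Y_W(z_1)(1\ot Y_W(z_2))$ applied to \eqref{start} is the image under $\pi_W$ of \eqref{start5}. This is polynomial in $z_1^{-1},z_2^{-1}$, so one may even take $r=0$.
\item For \eqref{rho-assoc2}, Remark \ref{Yrho_rem} gives $Y^\rho(z_0)$ on \eqref{start} as \eqref{start3}. Applying $Y_W(\cdot,z_2)$ gives the $\pi_W$-image of the expression preceding \eqref{start4}.
\item The manipulations leading to \eqref{start6} use only the relations \eqref{rlls} and \eqref{start01} inside $U_h(\gl_N)$, so they survive applying $\pi_W$. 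The substitution $z_1=z_2e^{z_0}$ then matches the two sides, using $L^+(z_2e^{z_0}|u)=L^+(z_2|z_0+u)$.
\item For \eqref{sigma-loc}, the two expressions \eqref{looc_lhs} and \eqref{looc_rhs} are identified by \eqref{qceq}, which is an identity in $U_h(\gl_N)$ and hence holds after applying $\pi_W$.
\end{enumerate}

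The main subtlety, and the step I expect to need care, is the topological one. The coefficients of $L^+_{[n]}(z|u)$ lie in the $h$-adically completed $U_h(\gl_N)$, so $\pi_W$ must be continuous for the $h$-adic topologies. This holds automatically because $W$ is topologically free and $\pi_W$ is $\CC[[h]]$-linear. Also, $(L^-_{[n]})^{-1}_W$ must make sense; it does, because $L^-$ is invertible in $U_h(\gl_N)$ (its diagonal entries are invertible and it is triangular). Modulo these remarks, everything follows verbatim from the proof of Theorem \ref{module_trig}.
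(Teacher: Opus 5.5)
Your proposal is correct and takes the same approach as the paper, which proves Corollary \ref{kor_44} only by saying one argues as in the proof of Theorem \ref{module_trig}, with $U_h(\gl_N)$ replaced by its action on $W$. Defining $Y_W(v,z)=\pi_W\big(Y_{U_h(\gl_N)}(v,z)1\big)$ is a clean way to inherit well-definedness and uniqueness from Theorem \ref{module_trig}; the checks of \eqref{rho-assoc1}, \eqref{rho-assoc2} and \eqref{sigma-loc} with $r=0$ then go through as you describe, because every identity used holds in $U_h(\gl_N)$ itself.
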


It is worth noting that, in the proof of Theorem \ref{module_trig}, we showed that the properties \eqref{rho-assoc2} and \eqref{sigma-loc} hold for $r=0$. This is a consequence of the form of the map \eqref{def_mod_map},  
$$Y_{U_h(\gl_N)}(v ,z)u\in U_h(\gl_N)[z^{-1}]_h\quad\text{for all }v\in\Vc,\, u\in U_h(\gl_N).$$
Naturally, the same applies to Corollary \ref{kor_44}. Suppose $(W,Y_W)$ is a weak $(\sigma   ,\rho   )$-deformed $\phi$-coordinated $\Vc$-module. If the module map $Y_W(\cdot, z)$ satisfies  
\beq\label{sigma-comm}
   Y_W(z_1)\big(1\otimes Y_W(z_2)\big)\big( \sigma  (z_1 /z_2 )(u\otimes v)\otimes w\big)=     Y_W(v,z_2) Y_W(u, z_1)  w  
\eeq
for all $u,v\in \Vc$ and $w\in W$,  it is said to be {\em $\sigma$-commutative}. 

Next, we turn our attention to the reflection equation algebra.

\begin{thm}\label{main_tthm} 
Let $c\in\CC  $.
Suppose that $(W,Y_W)$ is a weak $(\sigma,\rho)$-deformed $\phi$-coordinated $\Vc$-module such that for all $i,j=1,\ldots ,n$ we have
\beq\label{condition}
Y_W(t_{ij}^{(-1)}\vac,z)\in  z^{-1}\ndo W [[z]].  
\eeq
Then   the assignments
\beq\label{sigma-commm}
\left(\ell_{ij}\right)_W= \rez_z Y_W(t_{ij}^{(-1)}\vac,z) 
\eeq
with $i,j=1,\ldots ,n$
define a structure of $\mathcal{O}_h(Mat_N)$-module over $W$.
\end{thm}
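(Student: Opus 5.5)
We will extract the reflection equation from the $\sigma$-locality of the module map, using condition \eqref{condition} to pass to residues. Set $n=m=1$ and $u=v=0$ in the formulas for $\sigma$. Since $t^+_{ij}(u)=\delta_{ij}-h\sum_r t_{ij}^{(-r)}u^{r-1}$, the constant term in $u$ of $T^+(u)\vac$ is $\vac-h\sum_{i,j}e_{ij}\ot t_{ij}^{(-1)}\vac$. By \eqref{condition}, $Y_W(T^+(0)\vac,z)=1-h z^{-1}\mathcal{L}_W+O(z^0)$, where $\mathcal{L}_W=\sum e_{ij}\ot(\ell_{ij})_W$. More usefully, we write $Y_W(T^+(0)\vac,z)=1-hz^{-1}\mathcal{L}_W+M(z)$, where $M(z)\in\ndo\CC^N\ot\ndo W[[z]]$ is the regular part. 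Since the $\ell_{ij}$ generate $\mathcal{O}_h(Mat_N)$ with $h$ invertible only after a rescaling, we work with $\mathcal{L}'_W:=1-h\mathcal{L}_W$ times suitable normalization. Precisely, we aim to show that $\mathcal{L}_W$ satisfies \eqref{def_refl}. Replacing $\ell_{ij}$ by an affine combination with the identity preserves \eqref{def_refl}, because $R\,R_{21}$ commutes with nothing in general. We therefore must be careful and extract the coefficient directly rather than rely on that shortcut.

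\textbf{Step 1 (locality identity).} We apply the $\sigma$-locality \eqref{sigma-loc} to $u\otimes v=T^{+}_1(0)\vac\otimes T^{+}_2(0)\vac$. By \eqref{sigma_map} with $n=m=1$, the left side becomes
$$
R'\cdotlr\bigl(R(z_1/z_2)\,Y_W^{1}(z_1)\,R_{21}\,Y_W^{2}(z_2)\,R_{21}(z_1/z_2)^{-1}\bigr),
$$
where $Y^i_W(z)$ stands for $Y_W(T^+_i(0)\vac,z)$. The right side is $Y^2_W(z_2)Y^1_W(z_1)$. We multiply out the twisted product. Undoing the ``$\cdotlr$'' operation, as in the passage from \eqref{looc_lhs} to \eqref{qceq}, gives an identity of the form
$$
(z_1-z_2)^r\bigl(R(z_1/z_2)Y^1_W(z_1)R_{21}Y^2_W(z_2)-Y^2_W(z_2)RY^1_W(z_1)R_{21}(z_1/z_2)\bigr)\equiv 0 \pmod{h^n}.
$$

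\textbf{Step 2 (residues).} Both $Y^i_W$ lie in $z_i^{-1}\ndo W[[z_i]]$ by \eqref{condition}. Therefore, after multiplying by $(z_1-z_2)^r$, the expression lies in $W((z_1))((z_2))$ with at most simple poles. We cancel the factor $(z_1-z_2)^r$; this is legitimate because both sides lie in a space where $(z_1-z_2)$ is not a zero divisor, namely $\ndo W((z_1))((z_2))$. We then take $\rez_{z_1}\rez_{z_2}$, which extracts the coefficient of $z_1^{-1}z_2^{-1}$. Expanding $R(z_1/z_2)$ in $\iota_{z_1,z_2}$ means expanding in $z_2/z_1$, so $R(z_1/z_2)\to R(\infty)$ contributes at leading order. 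Here $R(\infty)$ is, up to a scalar, $R_{21}^{-1}$, read off from \eqref{rbar_1} and \eqref{R_mult}. This step is the main obstacle: we must check that cross terms coming from higher powers of $z_2/z_1$ paired with the regular parts of $Y^i_W$ do not contribute to the $z_1^{-1}z_2^{-1}$ coefficient. We will handle this by order counting. The factor $z_1^{-1}$ can only come from the simple pole of $Y^1_W$ together with the non-positive powers of $z_1$ from $R$. To control this, we plan instead to compare the full expansion in the region $|z_1|\gg|z_2|$ on both sides and to use the rationality of $R$. If this direct approach fails, the fallback is to use weak $\rho$-associativity together with the $z_0$-expansion.

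\textbf{Step 3.} Matching the normalizing scalars from $f$ and the prefactor in \eqref{R_mult} at $z=0$ and $z=\infty$ turns the residue identity into $R\mathcal{L}_1R_{21}\mathcal{L}_2=\mathcal{L}_2R\mathcal{L}_1R_{21}$, which is \eqref{def_refl}. Hence the assignment \eqref{sigma-commm} defines an $\mathcal{O}_h(Mat_N)$-module structure on $W$.
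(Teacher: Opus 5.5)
Your overall route is the paper's: apply $\sigma$-locality to $T^+_1(0)\vac\otimes T^+_2(0)\vac$, use \eqref{condition} to cancel $(z_1-z_2)^r$, undo the twisted product, and take residues. Your cancellation argument, via invertibility of $z_1-z_2$ on $W((z_1))((z_2))$ modulo $h^n$, is exactly what the paper's ``$r=0$'' tacitly relies on.

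The gap is that the proposal never actually carries out the residue extraction. You call the cross terms ``the main obstacle'' and offer only a plan plus a fallback. Moreover, the order count you sketch, in the variable $z_1$, cannot exclude them. Writing $\iota_{z_1,z_2}R(z_1/z_2)=\sum_{k\geqslant 0}R^{(k)}(z_2/z_1)^k$, the products of $R^{(k)}$, $k\geqslant 1$, with the coefficient of $z_1^{k-1}$ in $Y^1_W(z_1)$ really do contribute to the coefficient of $z_1^{-1}$. The count has to be made in $z_2$ instead:
\begin{itemize}
\item the expansion contains only nonnegative powers of $z_2$, while $Y^2_W(z_2)\in z_2^{-1}\ndo W[[z_2]]$;
\item so the coefficient of $z_2^{-1}$ sees only $k=0$;
\item then the coefficient of $z_1^{-1}$ sees only $\rez_{z_1}Y^1_W(z_1)=-h\mathcal{L}_W$.
\end{itemize}
The same holds on the right-hand side with $R_{21}(z_1/z_2)$. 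Hence no cross terms survive, and no recourse to $\rho$-associativity is needed.

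Step 3 is also misdescribed. The scalar prefactor in \eqref{R_mult} is the same function on both sides, so it simply cancels before any extraction, as in the paper; nothing is evaluated at $z=0$. What survives is the leading coefficient at $z=\infty$. By \eqref{rbar_1} this is $R-(e^{h/2}-e^{-h/2})P=R_{21}^{-1}$, and correspondingly $R^{-1}$ for $R_{21}(z)$. So the double residue gives $h^2R_{21}^{-1}\mathcal{L}_1R_{21}\mathcal{L}_2=h^2\mathcal{L}_2R\mathcal{L}_1R^{-1}$, not \eqref{def_refl} itself. To finish you must:
\begin{itemize}
\item divide by $h^2$, using that $W$ is topologically free and hence torsion-free, which you never mention;
\item multiply by $R_{21}$ on the left and $R$ on the right;
\item conjugate by $P$.
\end{itemize}
The paper's extraction lands on $R$ (the value at $z_1/z_2=0$) directly and so avoids the final conjugation. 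Your version stays inside the $\iota_{z_1,z_2}$-expansion and is equally valid once completed.

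Finally, your aside that an affine shift $\mathcal{L}\mapsto\mathcal{L}+a$ preserves \eqref{def_refl} is false: it adds the term $a(RR_{21}\mathcal{L}_2-\mathcal{L}_2RR_{21})$. You rightly do not use it.
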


\begin{prf} 
 Let  $(W,Y_W)$ be a weak $(\sigma,\rho)$-deformed $\phi$-coordinated $\Vc$-module such that \eqref{condition} holds.
It suffices to show that the matrix
$$
\mathcal{L}  =\sum_{i,j=1,\ldots ,n} e_{ij}\ot \left(\ell_{ij}\right)_W \in\ndo\CC^N\ot\ndo W
$$
with $\left(\ell_{ij}\right)_W$ given by \eqref{sigma-commm}, satisfies the reflection relation \eqref{def_refl}. 
By \eqref{condition},
the operator
$\mathcal{L}^\prime (z)  =-Y_W(T^+(0)\vac, z)$
is $\sigma$-commutative with itself, i.e., it satisfies the $\sigma$-locality property \eqref{sigma-loc}   for $r=0$; recall \eqref{sigma-comm}. 
Next, due to
 \eqref{sigma-commm}, we have
$
\rez_z \mathcal{L}^\prime (z)   =
h\mathcal{L}$.

The $\sigma$-commutativity  relation for $\mathcal{L}^\prime (z) $ can be written using the explicit  expression 
\eqref{sigma_map} for the braiding map $\sigma$ as
$$
R^\prime\cdotlr\left(R(z_1/z_2)\ts 
\mathcal{L}^\prime_{13} (z_1) 
 R_{21}\ts
\mathcal{L}^\prime_{23} (z_2) 
R_{21}(z_1/z_2)^{-1}
\right)
=\mathcal{L}^\prime_{23} (z_2)   \mathcal{L}^\prime_{13} (z_1) 
.
$$
Clearly, this is equivalent to
\beq\label{dh1}
 R(z_1/z_2) 
 \left(z_1\mathcal{L}^\prime_{13} (z_1)  \right) 
R_{21} 
\left(z_2\mathcal{L}^\prime_{23} (z_2)  \right)
=
\left(z_2\mathcal{L}^\prime_{23} (z_2)  \right) 
R 
\left(z_1\mathcal{L}^\prime_{13} (z_1) \right)
R_{21}(z_1/z_2)
.
\eeq
Setting
\beq\label{R-matrix}
 R(z_1/z_2) =R+\frac{(e^{h/2}-e^{-h/2})z_1/z_2}{1-z_1/z_2} P
\eeq
in \eqref{dh1} and then extracting the constant terms with respect to the variable $z_1$ yields
\beq\label{R-matrix-2}
 R   \ts
   \mathcal{L}^\prime_{13}    \ts
R_{21}   
\left(z_2\mathcal{L}^\prime_{23} (z_2)  \right)
=
\left(z_2\mathcal{L}^\prime_{23} (z_2)  \right) 
R  \ts
  \mathcal{L}^\prime_{13}    \ts
R_{21}  
\eeq 
for $\mathcal{L}^\prime =\rez_z \mathcal{L}^\prime (z) $.
Indeed, we can assume for the $R$-matrix to be as in \eqref{R-matrix}, as the additional normalization terms of $R(z_1/z_2)$ and $R_{21}(z_1/z_2)$ in \eqref{dh1} cancel.
Next, by extracting the constant terms  with respect to the variable $z_2$
in \eqref{R-matrix-2}, we obtain the reflection relation
$$
 R   \ts
  \mathcal{L}^\prime_{13}\ts   
R_{21}\ts 
 \mathcal{L}^\prime_{23} 
=
  \mathcal{L}^\prime_{23} \ts
R \ts
  \mathcal{L}^\prime_{13}  \ts 
R_{21}. 
$$
As $\mathcal{L}^\prime=h\mathcal{L}$, with $W$ being torsion-free, we conclude that the operator $\mathcal{L}$  also satisfies  the reflection relation \eqref{def_refl}, as required. 
\end{prf}

It is worth noting that the weak $(\sigma,\rho)$-deformed $\phi$-coordinated $\Vc$-modules
obtained by the construction  from  Corollary \ref{kor_44} satisfy the requirement imposed by \eqref{condition}.

At the end, we discuss a relation  between the quantum Feigin--Frenkel center and the central elements in the quantized enveloping algebra  from the viewpoint of weak  deformed $\phi$-coordinated modules.
First, we follow  the exposition in \cite[Sect. 2, 3]{JLM} to  recall  the fusion procedure for the Hecke
algebra \cite{C,IMO,N}.
Let $\Lambda$ be a standard tableau of shape $\lambda\vdash n$  with at most $N$ rows and  $c_k(\Lambda)$ the content $j-i$ of the box $(i,j)$ of $\lambda$ occupied by $k$ in $\Lambda $.
Introduce the order over the set of all pairs $(i,j)$, where 
$1\leqslant i <j\leqslant n $,   by
\beq\label{order}
(i,j )\prec (i^\prime, j^\prime)
\qquad\text{if}\qquad
j<j^\prime
\quad\text{or}\quad
j=j^\prime\text{ and }i<i^\prime .
\eeq
Let
$$
\R_\Lambda(z_1,\ldots ,z_n)
=
\prod_{(i,j) }^{\longrightarrow} 
\left(
P_{j-i\ts j-i+1}\ts \R_{j-i\ts j-i+1}(z_i e^{ (c_i(\Lambda)-c_j(\Lambda))h}/z_j)
\right),
$$
where the   product is taken over the set of all pairs $(i,j)$ such that   $1\leqslant i <j\leqslant n$ and the arrow indicates that the factors are ordered   with respect to  \eqref{order}.
Let $\lambda^\prime$ be the conjugate partition of $\lambda,$ $c_{\lambda^\prime}$ the corresponding Schur element and $\check{R}_0$ a certain invertible operator; see\cite{JLM} for   details. By the fusion procedure,    
$$
\Ec_\Lambda \coloneqq\frac{1}{c_{\lambda^\prime}}\ts \R_\Lambda(z_1,\ldots ,z_n)\ts
\check{R}_0^{-1}\Big|_{z_1=1}\Big|_{z_2=1}\ldots \Big|_{z_n=1} 
$$ 
is a well-defined operator satisfying $\Ec_\Lambda^2=\Ec_\Lambda.$

Denote by $\Vn$ the quantum affine vertex algebra  $\Vc$ at the critical level $c=-N$ and let $D $  be the diagonal   matrix \eqref{diagonal}.
Consider the formal power series
\beq\label{telambda}
T_\Lambda(u)=\tr_{1,\ldots ,n} \, T_{[n]}^+ (u^{(\Lambda)} )\vac \ts D^{\ot n}\ts \Ec_\Lambda \in \Vn[[u]],
\eeq
where the trace is taken over the tensor factors $1,\ldots ,n$. 
Due to \cite[Prop. 7.4]{BJK}, all coefficients of $T_\Lambda(u)$ belong to the center of the quantum vertex algebra $\Vn$.

By using Corollary \ref{kor_44} with $W=U_h(\gl_N)$, one finds that the image  of the series \eqref{telambda} under the map $Y_{U_h(\gl_N)}(\cdot ,z)$ is equal to
\begin{align}
\tr_{1,\ldots ,n} \,
L^+_{[n]}(z|u^{(\Lambda)})  \big( L_{[n]}^-\big)^{-1}\ts D^{\ot n}\ts \Ec_\Lambda
\in U_h(\gl_N)[[u]][z^{-1}]\label{yuhgln}
,
\end{align}
where $u^{(\Lambda)}=(u-c_1(\Lambda)h,\ldots ,u-c_n(\Lambda)h )$.
In particular, setting $u=0$ in \eqref{yuhgln} and then multiplying the resulting expression by $z^n$, one obtains the polynomial from $U_h(\gl_N) [z ]$,
\begin{align}
\tr_{1,\ldots ,n} 
\left(zL_1^- + he^{c_1(\Lambda) h}L_1^+\right)\ldots
\left(zL_n^- + he^{c_n(\Lambda) h} L_n^+ \right)
\left(L_n^-\right)^{-1}\ldots
\left(L_1^-\right)^{-1}
  D^{\ot n}\ts \Ec_\Lambda.\label{yuhgln1}
\end{align}
Finally, we remark that   all coefficients of the polynomial \eqref{yuhgln1} belong to the center of the  quantized enveloping algebra   $U_h(\gl_N) $. Indeed, this follows from \cite[Thm. 3.1]{JLM}, as the polynomial coincides, up to a multiplicative factor, with the $q$-analogues  $\mathbb{S}_\lambda(z)$ of Okounkov's quantum immanants \cite{Ok}  introduced therein.

\section{Deformed \texorpdfstring{$\Vc$}{Vc(glN)}-modules associated  with \texorpdfstring{$U(\gl_N)$}{U(glN)}}\label{section_06}

In this section, we    use   our recent results  \cite{BK0} to partially extend the constructions  from Sections \ref{compatible} and \ref{section_05}   to the case of the   normalized {\em Yang $R$-matrix},
\beq\label{Rrat}
R(u) = g(u/h) \left(1-hu^{-1}P\right) \in\ndo\CC^N\ot\ndo\CC^N[[h/u]],
\eeq
where $P$ is the permutation operator \eqref{permutation} and
$g(u)$ a unique element of $ 1+ u^{-1}\CC[[ u^{-1}]]$ satisfying 
$g(u+N)=g(u)(1-u^{-2})$.
From now on, we consider only the rational setting, governed by \eqref{Rrat}, so the fact that some objects are denoted by the same symbols as their trigonometric counterparts in previous sections should not cause any confusion.
 
The Etingof--Kazhdan construction    \cite[Thm. 2.3]{EK} of the quantum affine vertex algebra associated with \eqref{Rrat}  employs   the structure of the {\em double Yangian $\dy$ for $\mathfrak{gl}_N$ at the level $c\in\CC$}  \cite{I}.
It is defined as the $h$-adically completed  associative algebra over the ring $\CC[[h]]$ generated by the elements
$t_{ij}^{(\pm r)}$, where $i,j=1,\ldots ,N$ and $r=1,2,\ldots ,$ subject to the defining relations
\begin{align*}
R(u-v)\ts T_1^{\pm}(u)\ts   T_2^{\pm}(v)
=&\,\, T_2^{\pm}(v)\ts T_1^{\pm}(u)\ts R(u-v),\\
R(u-v+hc/2)\ts T_1^{-}(u)\ts   T_2^{+}(v)
=&\,\, T_2^{+}(v)\ts T_1^{-}(u)\ts R(u-v-hc/2).
\end{align*}
The generator matrices $T^{\pm}(u)\in\ndo\CC^N\ot \dy[[u^{\pm 1}]]$ are given by
$$
T^{\pm}(u)
=\sum_{i,j=1}^N e_{ij}\ot t_{ij}^{\pm (u)},\quad\text{where}\quad
t_{ij}^+(u)=\delta_{ij}-h\sum_{r\geqslant 1} t_{ij}^{(-r)}\ts u^{r-1},\,\,
t_{ij}^-(u)=\delta_{ij}+h\sum_{r\geqslant 1} t_{ij}^{(r)}\ts u^{-r}.
$$
The corresponding quantum vertex algebra structure \cite[Thm. 2.3]{EK} is defined over the {\em  vacuum module $\Vc$} for the double Yangian, which is the quotient of the algebra $\dy$ over its left ideal generated by all $t_{ij}^{(  r)}$  with $i,j=1,\ldots ,N$ and $r=1,2,\ldots .$

\begin{thm}\label{EK:qva:rat}
Let $c\in\CC .$ There exists a unique   structure of  quantum vertex algebra over $\Vc $
  such that 
	the vertex operator map $Y(\cdot ,z)$ is given by
$$
Y\big(T^+_{[n]}  (u)\vac,z\big)=T^+_{[n]}  (z|u)\ts T_{[n]}^- (z+hc/2|u)^{-1}, 
$$
the vacuum vector is   $\vac$	
and the braiding  map $\mathcal{S}$ is defined by  
\begin{align*}
&\mathcal{S}(z)\left( R_{nm}^{  12}( z+u-v   )^{-1}\ts  T_{[m]}^{+ 24}(v) \ts R_{nm}^{  12}( z+u-v -h  c ) \ts T_{[n]}^{+ 13}(u)  (\vac\otimes \vac) \right)\non\\
 =\, &   T_{[n]}^{+ 13}(u)\ts    R_{nm}^{  12}( z+u-v +h  c  )^{-1} \ts  
 T_{[m]}^{+ 24}(v) \ts   R_{nm}^{  12}( z+u-v  )(\vac\otimes \vac) .  
\end{align*}
\end{thm}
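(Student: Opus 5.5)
This theorem is the rational analogue of Theorem \ref{EK:qva}, and it is a direct specialization of Etingof--Kazhdan's general construction \cite[Thm. 2.3]{EK}. The plan is therefore to verify that the normalized Yang $R$-matrix \eqref{Rrat} satisfies the hypotheses of that theorem, and then to read off the three structures.

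\textbf{Hypotheses on $R(u)$.} Three properties are needed.
\begin{enumerate}
\item The additive quantum Yang--Baxter equation $R_{12}(u)R_{13}(u+v)R_{23}(v)=R_{23}(v)R_{13}(u+v)R_{12}(u)$. This holds for $1-hu^{-1}P$ by the standard Yang computation, and the scalar factor $g(u/h)$ does not affect it.
\item Unitarity $R_{12}(u)R_{21}(-u)=1$. One has $(1-hu^{-1}P)(1+hu^{-1}P)=1-h^2u^{-2}$. So this reduces to the identity $g(u/h)g(-u/h)=(1-h^2u^{-2})^{-1}$, which should follow from the defining functional equation $g(u+N)=g(u)(1-u^{-2})$ together with uniqueness in $1+u^{-1}\CC[[u^{-1}]]$.
\item The crossing symmetry property. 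The choice of $g$ is made precisely so that crossing holds with shift $Nh$. This is the rational counterpart of \eqref{csym_m}, and it is what \cite{EK} needs to build the operator $T^-(u)$.
\end{enumerate}

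\textbf{Construction of $Y$.} Next I construct $T^-(u)$ acting on the vacuum module exactly as in Lemma \ref{lemma21}. Because $\Vc$ is a quotient of $\dy$ by the left ideal generated by the $t^{(r)}_{ij}$, the operators $T^\pm(u)$ act by left multiplication. The second defining relation of $\dy$ then yields the rational analogue of \eqref{teminus}, with $R_{1n}^{01}(u-v\pm hc/2)$. The vertex operator is defined by the stated formula, and $\Vc$ is identified with $U(R)$ completed, via PBW-type arguments.

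\textbf{Axioms and braiding.} Vacuum and creation properties are immediate from the formula for $Y$. Weak associativity and the $\Sc$-locality identity follow from the RTT-relations among $T^+_{[n]}(u|z_1)$, $T^-_{[n]}(u|z_1)$ and $T^\pm_{[m]}(v|z_2)$, which are the rational analogues of \eqref{RTT3}. The braiding $\Sc$ is well defined by the displayed formula because the elements $R^{-1}T^+R\,T^+(\vac\otimes\vac)$ span a dense submodule of $\Vc\otimes\Vc$, since the $R$-matrices are invertible. The Yang--Baxter equation and unitarity of $\Sc$ come from those of $R$. Uniqueness holds because $\Vc$ is generated by the coefficients of $T^+_{[n]}(u)\vac$.

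\textbf{Main obstacle.} The delicate step is the existence and invertibility of $T^-(u)$, that is, the consistency of the rational analogue of \eqref{teminus} with the defining relations. This is where crossing symmetry enters, and where I would rely directly on \cite[Lemma 2.1]{EK}.
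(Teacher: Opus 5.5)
Your proposal takes essentially the same route as the paper. The paper gives no independent proof and simply quotes this result as the rational instance of Etingof--Kazhdan's \cite[Thm. 2.3]{EK}. Checking that the normalized Yang $R$-matrix satisfies the Yang--Baxter equation, unitarity and crossing symmetry is exactly what that citation presupposes. Your unitarity claim does hold: it reduces to $g(w)g(-w)(1-w^{-2})=1$, and by the functional equation the left side is $N$-periodic and lies in $1+w^{-1}\CC[[w^{-1}]]$, hence equals $1$.

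Two small corrections. First, since $\Vc$ is defined here as a quotient of $\dy$, the operator $T^-(u)$ acts by left multiplication and is automatically invertible, so you do not need \cite[Lemma 2.1]{EK}; the genuinely nontrivial input is the PBW-type identification of the vacuum module with $Y^+(\gl_N)$. Second, density of the elements $R^{-1}T^+R\,T^+(\vac\ot\vac)$ only gives uniqueness of $\Sc$, not that the formula is well defined; existence comes from the construction in \cite{EK}.
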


The operators $T^\pm_{[n]}$ over the vacuum module $\Vc$ are defined in parallel with \eqref{tplusovi_trig} and the $R$-matrix products as in \eqref{Rnm1}, so that we have
$$
R_{nm}^{12}( z+u -v  +ah )=
\prod_{r=1,\dots,n }^{\longrightarrow} \prod_{s=n+1,\dots,n+m }^{\longleftarrow}
R_{rs}( z+ u_r-v_{s-n}   + ah ).
$$

The next two definitions are based on \cite[Def. 3.1, 3.3]{BK0}. They can be regarded as   additive versions of Definitions \ref{def_br_mult} and \ref{weaklyassoc} above. However, note that some requirements imposed on the corresponding maps $\sigma$ and $\rho$ are   strengthened.

\begin{defn} 
Suppose $\mathcal{V}$ is a topologically free $\CC[[h]]$-module and
$$
\sigma(z),\rho(z)\colon \mathcal{V}\ot \mathcal{V}\to \mathcal{V}\ot \mathcal{V}\ot \CC [[z^{-1},h]] 
$$
 are $\CC[[h]]$-module maps.
The pair $(\sigma, \rho)$ is said to be an {\em  additive compatible pair} if it satisfies the following conditions.

\begin{enumerate}
\item The map $\sigma$ satisfies the  quantum Yang--Baxter equation,
\begin{align*}
&\sigma_{12}(z_1  )\ts\sigma_{13}(z_1+z_2    )\ts\sigma_{23}(z_2  )=
\sigma_{23}(z_2  )\ts\sigma_{13}(z_1+z_2   )\ts\sigma_{12}(z_1 )   \\
\intertext{and  the unitarity condition,}
&\sigma(-z )\ts \sigma_{21}(z )=\sigma_{21}(z )\ts\sigma(-z ) =1 . 
\end{align*}

\item The map    $\rho(z)$ is invertible.

\item The map 
\begin{align} 
 \mathcal{S}_{\sigma,\rho}(z) \colon \mathcal{V}\ot \mathcal{V} &\to \mathcal{V}\ot \mathcal{V},\non\\
\mathcal{S}_{\sigma,\rho}(z)  &=  \rho (z )\ts \sigma (z )\ts \rho_{21}(-z )^{-1} \big.\label{Ssigmarhorat}
\end{align}
satisfies the  quantum  Yang--Baxter equation,  
$$
 \mathcal{S}_{\sigma,\rho}(z_1)_{12} \ts
 \mathcal{S}_{\sigma,\rho}(z_1+z_2)_{13} \ts
 \mathcal{S}_{\sigma,\rho}(z_2)_{23} 
=
 \mathcal{S}_{\sigma,\rho}(z_2)_{23}  \ts
 \mathcal{S}_{\sigma,\rho}(z_1+z_2)_{13}  \ts
 \mathcal{S}_{\sigma,\rho}(z_1)_{12}
$$
 and the unitarity condition,  
$$
 \mathcal{S}_{\sigma,\rho}(z)_{12} \ts
 \mathcal{S}_{\sigma,\rho}(-z)_{21}= 
 \mathcal{S}_{\sigma,\rho}(-z)_{21}  \ts
 \mathcal{S}_{\sigma,\rho}(z)_{12} = 1.
$$
\end{enumerate}
\end{defn}

\begin{defn} 
Let $(\mathcal{V},Y,\Sc,\vac)$ be an $h$-adic quantum vertex algebra and $(\sigma,\rho)$ an additive compatible pair over $\mathcal{V}$. 
The pair $(\sigma,\rho)$ is said to be {\em   associated} with the $h$-adic quantum vertex algebra $\mathcal{V}$  if the map $\mathcal{S}_{\sigma,\rho}(z)$, as given by \eqref{Ssigmarhorat}, satisfies $\mathcal{S}_{\sigma,\rho}(z)=\mathcal{S}(z)$.
\end{defn}

Let us recall the   compatible pair from \cite[Sect. 4]{BK0}.

\begin{lem}\label{lem_sigma}
Let $c\in\CC$. There exists   unique maps 
$$
 \sigma(z),\rho(z)\colon \Vc\ot\Vc\to\Vc\ot\Vc\ot\CC[[h]][z^{-1}]_h
$$
 such that on
$
(\ndo\mathbb{C}^{N})^{\otimes n} \otimes
(\ndo\mathbb{C}^{N})^{\otimes m}\otimes \Vc \ot \Vc$
we have
\begin{align*}
 &\sigma(z)\Big( T_{[n]}^{+13}(u) \ts T_{[m]}^{+24}(v) (\vac\ot\vac)\Big)  \\
= &\, R_{nm}^{  12}(z+u-v)\ts  T_{[n]}^{+13}(u)  \ts T_{[m]}^{+24}(v) 
R_{nm}^{  12}(z+u-v)^{-1}(\vac\ot\vac),\\
&\rho(z)\Big( T_{[n]}^{+13}(u) \ts T_{[m]}^{+24}(v)(\vac\ot\vac) \Big) \\ 
=  & \,T_{[n]}^{+13}(u)  \ts R_{nm}^{  12}(z+u-v+h  c)^{-1}\ts T_{[m]}^{+24}(v) 
R_{nm}^{  12}(z+u-v) (\vac\ot\vac).
\end{align*}
Moreover, the maps $\sigma$ and $\rho$ form an additive compatible pair    associated  with $\Vc$.
\end{lem}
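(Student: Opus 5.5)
The statement is Lemma~\ref{lem_sigma}, recalled from \cite[Sect.~4]{BK0}. The plan is to reproduce the argument there in three steps: well-definedness of $\sigma$ and $\rho$, the compatible-pair axioms, and the association $\mathcal{S}_{\sigma,\rho}(z)=\mathcal{S}(z)$.

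\emph{Well-definedness.} The vacuum module $\Vc$ is spanned topologically by the coefficients of $T^{+}_{[n]}(u)\vac$, since it is the quotient of $\dy$ by the left ideal generated by the $t_{ij}^{(r)}$. The only relations among these coefficients are the RTT relations for $T^+$, together with the compatibility of different $n$ coming from setting variables equal. Uniqueness is therefore automatic. For existence, we must check that each prescribed right-hand side respects the relations
\[
R_{[n]}(u)\,T^+_{[n]}(u)=T^+_{[n]}(u^\circ)\,R_{[n]}(u),
\]
where $u^\circ$ denotes the reversed family, in the first factor, and the analogous relations in the second factor.

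For $\sigma$, take an RTT relation in the first factor. The product $R(u)\,R^{12}_{nm}(z+u-v)$ can be moved past using the Yang--Baxter equation for the Yang $R$-matrix, with all $R$'s depending only on differences. This turns it into $R^{12}_{nm}(z+u^\circ-v)\,R(u)$ on the opposite side. The same works for the inverse factor on the right.

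The map $\rho$ is handled the same way. Its middle factor $R^{12}_{nm}(z+u-v+hc)^{-1}$ is shifted uniformly by $hc$, so the same Yang--Baxter manipulations apply. The maps take values in $\CC[[h]][z^{-1}]_h$ because $R(u)\in \ndo\CC^N\ot\ndo\CC^N[[h/u]]$. Invertibility of $\rho$ is shown by writing the inverse explicitly:
\[
T^{+13}_{[n]}(u)\,T^{+24}_{[m]}(v)(\vac\ot\vac)\;\longmapsto\; T^{+13}_{[n]}(u)\,R^{12}_{nm}(z+u-v+hc)\,T^{+24}_{[m]}(v)\,R^{12}_{nm}(z+u-v)^{-1}(\vac\ot\vac).
\]
To check that this is well defined, we first reorder, via the RTT relation for $T^+$ in $\Vc$ with the expression in normal form, so that it lands on a spanning vector.

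\emph{Axioms for $\sigma$.} The Yang--Baxter equation for $\sigma$ reduces, on $T^{+}_{[n]}(u)\,T^{+}_{[m]}(v)\,T^{+}_{[k]}(w)(\vac\ot\vac\ot\vac)$, to the identity
\[
R^{12}(z_1+u-v)\,R^{13}(z_1+z_2+u-w)\,R^{23}(z_2+v-w)=R^{23}(z_2+v-w)\,R^{13}(z_1+z_2+u-w)\,R^{12}(z_1+u-v)
\]
for the products $R_{nm}$, which follows from the single Yang--Baxter equation by induction. Unitarity of $\sigma$ follows from $R(u)R_{21}(-u)=1$, which holds for the normalized Yang $R$-matrix since $g$ is chosen accordingly.

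\emph{Association and the main obstacle.} The step I expect to be the crux is computing $\rho(z)\,\sigma(z)\,\rho_{21}(-z)^{-1}$ and matching it with $\mathcal{S}(z)$ from Theorem~\ref{EK:qva:rat}. Since $\mathcal{S}$ is defined on vectors of the form
\[
R^{12}_{nm}(z+u-v)^{-1}\,T^{+24}_{[m]}(v)\,R^{12}_{nm}(z+u-v-hc)\,T^{+13}_{[n]}(u)(\vac\ot\vac),
\]
I would proceed as follows:
\begin{enumerate}
\item Observe that this vector is precisely $\rho_{21}(-z)$ applied to $T^{+}_{[n]}(u)\,T^{+}_{[m]}(v)$ after the flip, once one uses $R_{21}(-x)=R(x)^{-1}$.
\item Apply $\sigma(z)$ to obtain the conjugate $R\,(T\ot T)\,R^{-1}$.
\item Apply $\rho(z)$ to obtain $T^{+13}\,R(z+u-v+hc)^{-1}\,T^{+24}\,R(z+u-v)$, whose $R$-factors cancel against those from step~2 by unitarity.
\end{enumerate}
Bookkeeping of the orders of the factors in $R^{12}_{nm}$ versus $R^{21}_{nm}$ is where care is needed. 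Once association holds, the Yang--Baxter equation and unitarity for $\mathcal{S}_{\sigma,\rho}$ are inherited from those of $\mathcal{S}$ in Theorem~\ref{EK:qva:rat}.
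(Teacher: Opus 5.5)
The paper gives no argument for this lemma; it is quoted from \cite{BK0}. Your overall route is the natural one: check compatibility with the RTT relations via the Yang--Baxter equation, get the Yang--Baxter equation and unitarity of $\sigma$ from those of the products $R^{12}_{nm}$, and compare $\rho(z)\sigma(z)$ with $\mathcal{S}(z)\rho_{21}(-z)$. However, your explicit inverse of $\rho$ is wrong, and the association step needs invertibility.

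\textbf{The inverse of $\rho$.} Put $A=R^{12}_{nm}(z+u-v+hc)$, $B=R^{12}_{nm}(z+u-v)$ and $X=T^{+13}_{[n]}(u)T^{+24}_{[m]}(v)(\vac\ot\vac)$. The middle factor $A^{-1}$ sits between $T^{+13}_{[n]}(u)$ and $T^{+24}_{[m]}(v)$, so it acts on the matrix part through the multiplication of $((\ndo\CC^N)^{\text{op}})^{\ot n}\ot(\ndo\CC^N)^{\ot m}$. Thus $\rho(z)X=(A^{-1}\cdotrl X)\,B$, and $\rho$ commutes with such matrix operations because it acts only on $\Vc\ot\Vc$. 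Your candidate is $X\mapsto (A\cdotrl X)B^{-1}$. Composing it with $\rho(z)$ gives $\big(A\cdotrl((A^{-1}\cdotrl X)B)\big)B^{-1}$, which is not $X$. The ordinary inverse is not the inverse in the $\cdotrl$-algebra, and $B$ does not commute past the $\cdotrl$-action.

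Concretely, take $n=m=1$ and apply the augmentation $t^{(-r)}_{ij}\mapsto 0$ to both factors. You get
\[
\big(A\cdotrl(A^{-1}B)\big)B^{-1}=1+c\,h^3(z+u-v)^{-3}(NP-1)+O(h^4),
\]
which is not $1$ when $c\neq0$ and $N\geqslant 2$. The correct inverse is
\[
X\mapsto (A^{-1})^\prime\cdotrl\big(X\,B^{-1}\big),
\]
where $(A^{-1})^\prime$ is the inverse of $A^{-1}$ in $((\ndo\CC^N)^{\text{op}})^{\ot n}\ot(\ndo\CC^N)^{\ot m}$. This is where crossing symmetry enters, exactly as the paper remarks for the trigonometric $\rho$. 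Its mere existence also follows from $R\equiv 1 \bmod h$.

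\textbf{The association step.} Your step~1 is off by a conjugation. Unitarity gives
\[
\rho_{21}(-z)X=T^{+24}_{[m]}(v)\,R^{12}_{nm}(z+u-v-hc)\,T^{+13}_{[n]}(u)\,B^{-1}(\vac\ot\vac),
\]
so the vector on which $\mathcal{S}(z)$ is prescribed is $B^{-1}\big(\rho_{21}(-z)X\big)B$, not $\rho_{21}(-z)X$. Next, $\rho(z)\sigma(z)X=B\,T^{+13}_{[n]}(u)\,A^{-1}\,T^{+24}_{[m]}(v)(\vac\ot\vac)$. The cancellation $BB^{-1}$ here is between the outer factors of $\sigma$ and $\rho$; it is not an application of unitarity. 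Conjugating by $B$ then reproduces exactly the value prescribed for $\mathcal{S}(z)$. Taken literally, your steps would leave a stray left factor $B$ and miss the right factor $B$.

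The correct computation gives $\mathcal{S}(z)\rho_{21}(-z)=\rho(z)\sigma(z)$. You can conclude $\mathcal{S}_{\sigma,\rho}(z)=\mathcal{S}(z)$ from this only once $\rho_{21}(-z)$ is known to be invertible. That is why the error in the inverse of $\rho$ is not cosmetic.
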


Suppose $(\mathcal{V},Y,\Sc,\vac)$ is an $h$-adic quantum vertex algebra and $(\sigma,\rho)$ an additive compatible pair over $\mathcal{V}$.
In parallel with \eqref{Yrhho}, one can associate with the vertex operator map $Y$ the map $Y^\rho$ by
$$
Y^\rho(z)\coloneqq  Y(z)\ts \rho( z)
\colon \mathcal{V}\ot\mathcal{V} \to\mathcal{V}\ot  \CC((z))[[h]].
$$
The next definition is a minor adjustment of \cite[Def. 3.5]{BK0} to the $h$-adic setting.

\begin{defn}\label{defn_deformed_topp}
Let $(\mathcal{V},Y,\vac,\Sc)$  be an $h$-adic quantum vertex algebra  and $(\sigma,\rho)$ an additive compatible pair associated with $\mathcal{V}$.
 Let $W$ be a topologically free $\CC[[h]]$-module equipped with a  $\CC[[h]]$-module map
\begin{align*}
Y_W(\cdot ,z) \colon \mathcal{V}\ot W&\to W((z ))_h, \\
v\ot w&\mapsto Y_W(z)(v\ot w)=Y_W(v,z)w=\sum_{r\in\mathbb{Z}} v_r w \ts z^{-r-1}.\non
\end{align*}
A pair $(W,Y_W)$ is said to be a {\em   $(\sigma,\rho)$-deformed $\mathcal{V}$-module} if the   map $Y_W(\cdot, z)$ possesses the following properties:
\begin{align}
&Y_W(\vac, z)w=w\quad\text{for all }w\in W,\non \\
\intertext{the   {\em weak $\rho$-associativity}:
for any   $u,v\in \mathcal{V}$, $w\in W$   and $n\in\ZZ_{>0}$ there exists $r\in\ZZ_{\geqslant 0}$ such that}
 &   (z_0 +z_2)^r\ts Y_W(u,z_0 +z_2)\ts Y_W(v,z_2)\ts w \non\\
&\qquad -   (z_0 +z_2)^r\ts  Y_W\big(Y^\rho(u,z_0 )v,z_2\big)\ts w \in h^n\ts W[[z_0^{\pm 1}, z_2^{\pm 1}]]  , \label{assoc_rho_rat}
\intertext{and the {\em $\sigma$-locality}: for any $u,v\in \mathcal{V}$  and $n\in\ZZ_{>0}$ there exists $r\in\ZZ_{\geqslant 0}$ such that  }
&   (z_1-z_2)^r \ts Y_W(z_1)\big(1\otimes Y_W(z_2)\big)\big(\sigma( z_1 -z_2 )(u\otimes v)\otimes w\big)\non\\
&\qquad - (z_1-z_2)^r \ts Y_W(v,z_2)  Y_W(u,z_1)  w  
\in h^n\ts W[[z_1^{\pm 1}, z_2^{\pm 1}]]   \qquad \text{for all }w\in W 
.\label{sigma_loc_top_rat}
\end{align}
 \end{defn}

 The {\em dual Yangian $Y^+ (\mathfrak{gl}_N)$ for $\mathfrak{gl}_N$} is defined as an $h$-adically completed  subalgebra of the double Yangian $DY_c(\mathfrak{gl}_N)$ generated by all $t_{ij}^{(-r) }$ with $r\geqslant 1$. As a $\CC[[h]]$-module, $Y^+(\mathfrak{gl}_N)$ coincides with $\Vc$.
Consider the universal enveloping algebra $U(\mathfrak{gl}_N)$. Let $E=(E_{ij})$ be the matrix whose $(i,j)$-entry is the element $E_{ij}\in U(\mathfrak{gl}_N)$, i.e., 
$$
E=\sum_{i,j=1,\ldots ,N} e_{ij}\ot E_{ij}\in\ndo\CC^N \ot U(\mathfrak{gl}_N).
$$

 The following construction of deformed modules,  given in Theorem \ref{ug} below, is motivated by the fact that   for any nonzero  $a\in \CC$ the mapping 
$$
T^+(u)\mapsto 1+  E h(a+u)^{-1}
$$
defines the {\em evaluation homomorphism}
$Y^+ (\mathfrak{gl}_N)\to U(\mathfrak{gl}_N)[[h]] $. 
Let
$$
E(z)=1+  E h z^{-1}\in\ndo\CC^N\ot U(\mathfrak{gl}_N)[[h]] [z^{-1}].
$$
For any $n\geqslant 1$ and the family of variables $u=(u_1,\ldots ,u_n)$, define the operator
$$
E_{[n]}(z+u) =E_1(z+u_1)\ldots E_n(z+u_n),
$$
 where its action is given by the algebra multiplication.
Clearly, $E_{[n]}(z+u)$ belongs to $\left(\ndo\CC^N\right)^{\ot n}\ot\ndo U(\mathfrak{gl}_N)[[h]] [z^{-1}][[u_1,\ldots ,u_n]]$.
The following theorem, which employs the additive compatible pair from Lemma \ref{lem_sigma}, can be regarded as a rational counterpart of Corollary \ref{kor_44}.

\begin{thm}\label{ug}
Let $c\in\CC  $.
Suppose $W$ is a   $  U (\gl_N)$-module.
There exists a unique structure of  $(\sigma,\rho)$-deformed   $\Vc$-module over $W[[h]]$ such that the corresponding module map  $Y_{W[[h]]}(\cdot ,z)$ satisfies
\beq\label{fothemp}
Y_{W[[h]]}(T_{[n]}^+(u)\vac ,z) = E_{[n]}(z+u)_{W[[h]]}   .
\eeq
\end{thm}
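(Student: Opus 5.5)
Our plan is to follow the proof of Theorem \ref{module_trig}, replacing the multiplicative structures by their additive counterparts from Lemma \ref{lem_sigma}. We first need $Y_{W[[h]]}$ to be well defined by \eqref{fothemp}. Since $\Vc$ is spanned (topologically) by the coefficients of $T_{[n]}^+(u)\vac$, and these satisfy the RTT-relation with $R(u-v)$, we must check that $E_{[n]}(z+u)$ satisfies the same relations:
$$R_{nm}^{12}(u-v)\ts E_{[n]}^{13}(z+u)\ts E_{[m]}^{23}(z+v)=E_{[m]}^{23}(z+v)\ts E_{[n]}^{13}(z+u)\ts R_{nm}^{12}(u-v).$$
By the Yang--Baxter equation this reduces to the case $n=m=1$. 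That case is the classical fact that $T(u)\mapsto 1+Eh u^{-1}$ respects the Yang relation $(1-h(u-v)^{-1}P)E_1(u)E_2(v)=E_2(v)E_1(u)(1-h(u-v)^{-1}P)$; the normalization $g$ cancels. The same fact is behind the evaluation homomorphism quoted above. Uniqueness is then immediate. The vacuum property holds for $n=0$.

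For weak $\rho$-associativity \eqref{assoc_rho_rat}, we compute $Y^\rho(z_0)$ on $T_{[n]}^{+13}(u)T_{[m]}^{+24}(v)(\vac\ot\vac)$, in analogy with Remark \ref{Yrho_rem}. By the explicit $\rho$ of Lemma \ref{lem_sigma}, the formula for $Y$ in Theorem \ref{EK:qva:rat} and the rational analogue of \eqref{RTT3} for $T^-T^+$, the factor $T^-_{[n]}(z_0+hc/2|u)^{-1}$ moved past $T^+_{[m]}(v)$ cancels against $R(z_0+u-v+hc)^{-1}$ and $R(z_0+u-v)$. Since $T^-$ annihilates $\vac$ up to the identity, we expect
$$Y^\rho(z_0)\big(T_{[n]}^{+13}(u)T_{[m]}^{+24}(v)(\vac\ot\vac)\big)=T_{[n]}^{+13}(z_0|u)\ts T_{[m]}^{+23}(v)\vac .$$
This identity is presumably already in \cite{BK0}. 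Applying $Y_{W[[h]]}(\cdot,z_2)$ gives $E_{[n]}^{13}(z_2+z_0+u)E_{[m]}^{23}(z_2+v)$. This coincides with $Y_{W[[h]]}(z_0+z_2)(1\ot Y_{W[[h]]}(z_2))$ applied to the same vector, expanded in nonnegative powers of $z_0$, that is, with $(z_0+z_2)^{-1}$ expanded as $\iota_{z_2,z_0}$. The remaining issue is that the left-hand side of \eqref{assoc_rho_rat} is expanded in the other direction, $Y_W(u,z_0+z_2)$ with $z_0$ dominant. Both expansions of $(z_0+z_2)^{-k}$ with $k\le n$ differ by something killed after multiplying by $(z_0+z_2)^r$ for $r\ge n$. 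Since $E_{[n]}$ involves only powers $(z+u_i)^{-1}$ up to order $n$, and the $u$-expansion only produces higher poles in $z$ paired with $u$-powers, we choose $r$ working modulo $h^n$, where only finitely many powers of $h$ and hence of $(z+u_i)^{-1}$ survive.

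For $\sigma$-locality \eqref{sigma_loc_top_rat}, we apply the explicit $\sigma(z_1-z_2)$ and obtain
$$R_{nm}^{12}(z_1-z_2+u-v)\ts E^{13}_{[n]}(z_1+u)\ts E^{23}_{[m]}(z_2+v)\ts R_{nm}^{12}(z_1-z_2+u-v)^{-1}.$$
By the RTT-relation from the first step, applied with the shifted arguments $z_1+u$ and $z_2+v$, this equals $E^{23}_{[m]}(z_2+v)E^{13}_{[n]}(z_1+u)$. As polynomial expressions these agree exactly. The expansions differ only through $R(z_1-z_2+\cdots)^{\pm1}$, expanded in $h/(z_1-z_2)$, so multiplying by $(z_1-z_2)^r$ with $r$ large modulo $h^n$ clears the denominators and gives \eqref{sigma_loc_top_rat}.

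The main obstacle is the bookkeeping of expansion directions and the mod $h^n$ choice of $r$, both in $\rho$-associativity and in the $\sigma$-conjugation, together with confirming the closed formula for $Y^\rho$.
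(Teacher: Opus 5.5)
Your proposal is correct and takes the same route as the paper, whose proof is a direct verification of the axioms of Definition~\ref{defn_deformed_topp} for the map \eqref{fothemp}, resting on the evaluation RTT relation, the closed formula $Y^\rho(z_0)\big(T^{+13}_{[n]}(u)T^{+24}_{[m]}(v)(\vac\ot\vac)\big)=T^{+13}_{[n]}(z_0|u)\ts T^{+23}_{[m]}(v)\vac$, and the RTT relation with shifted arguments for $\sigma$-locality. The only divergence is from the paper's remark that both properties hold with $r=0$: this is true for $\sigma$-locality (both sides are the same Laurent polynomial in $z_1^{-1},z_2^{-1}$, so your factor $(z_1-z_2)^r$ is unnecessary), but for $\rho$-associativity as written in \eqref{assoc_rho_rat} your requirement that $r$ be at least the pole order is the accurate one, since already for $t_{ij}^{(-1)}\vac$ and $\vac$ the two sides differ at $r=0$ by $-z_0^{-1}\sum_{k\in\ZZ}(-z_2/z_0)^k\,E_{ij}w$.
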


\begin{prf}
Despite slightly different setting, the theorem can be verified by arguing as in the proof of \cite[Thm. 5.1]{BK0}, i.e., by directly checking   that all requirements imposed by Definition \ref{defn_deformed_topp} hold for the map \eqref{fothemp}. Furthermore, both the weak $\rho$-associativity
\eqref{assoc_rho_rat} and the $\sigma$-locality
 \eqref{sigma_loc_top_rat} can be shown to hold for $r=0$.
\end{prf}

At the end, we discuss a relation  between the quantum Feigin--Frenkel center and the central elements in the universal enveloping algebra  from the viewpoint of  $(\sigma,\rho)$-deformed  modules. As before, let   $\Lambda$ be a standard tableau of shape $\lambda\vdash n$  with at most $N$ rows and  $c_k(\Lambda)$ the content $j-i$ of the box $(i,j)$ of $\lambda$ occupied by $k$ in $\Lambda $.
Let
$$
\R(u_1,\ldots ,u_n)
=
\prod_{1\leqslant i<j\leqslant n } 
\R_{ij}(u_i -u_j) \quad\text{for}\quad
\R(u)=1-hu^{-1}P,
$$
where the   product is taken  in the lexicographical order on the set of pairs $(i,j)$.
By the fusion procedure originated in \cite{j:yo}, 
$$
\mathcal{E}_\Lambda\coloneqq\frac{1}{h(\lambda)}\R(u_1,\ldots ,u_n) \Big|_{u_1=hc_1(\Lambda)}\Big|_{u_2=hc_2(\Lambda)}\ldots \Big|_{u_n=hc_n(\Lambda)} ,
$$
where $h(\lambda)$ is the product of all hook lengths of the boxes of $\lambda$,
is a well-defined operator satisfying $\mathcal{E}_\Lambda^2 =\mathcal{E}_\Lambda$;
see also \cite[Sect. 6.4]{m:yc} for
more details
and references.

Denote by $\Vn$ the quantum vertex algebra  $\Vc$ at the critical level $c=-N$.
Consider the formal power series
\beq\label{teelambda}
T_\Lambda(u)=\tr_{1,\ldots ,n} \, T_{[n]}^+ (u^{(\Lambda)} )\vac  \ts \Ec_\Lambda \in \Vn[[u]],
\eeq
where $u^{(\Lambda)}=(u+hc_1(\Lambda),\ldots ,u+hc_n(\Lambda))$
and the trace is taken over the tensor factors $1,\ldots ,n$.
Due to \cite[Thm. 2.4]{JKMY}, all coefficients of $T_\Lambda(u)$ belong to the center of the quantum vertex algebra $\Vn$.

Set $W= U (\gl_N)$ in Theorem \ref{ug} and then consider the image of the constant term
$T_\Lambda(0)$ of the series \eqref{teelambda} under the   map 
$Y_{ U (\gl_N)[[h]]}(\cdot ,zh)$. Multiplying the image by the polynomial $(z+ c_1(\Lambda))\ldots (z+ c_n(\Lambda))$, one obtains  
$$
\mathbb{S}_\lambda (z)\coloneqq 
\tr_{1,\ldots ,n} 
\left(z+ c_1(\Lambda)+E_1  \right)\ldots \left(z+ c_n(\Lambda)+E_n  \right)
\Ec_\Lambda \in  U (\gl_N)[z ].
$$
Finally, we remark that the elements $\mathbb{S}_\lambda =\mathbb{S}_\lambda (0)$
given by
$$
\mathbb{S}_\lambda  =
\tr_{1,\ldots ,n} 
\left(  c_1(\Lambda)+E_1  \right)\ldots \left(  c_n(\Lambda)+E_n  \right)
\Ec_\Lambda \in  U (\gl_N) ,
$$
originated in \cite{Ok},
are called {\em quantum immanants}. 
The polynomials $\mathbb{S}_\lambda (z)$ are independent of the choice of the
standard tableau $\Lambda$ of shape $\lambda$.
Furthermore,  all
$\mathbb{S}_\lambda$ with $\lambda$ running over all diagrams with at most $N$ rows form a basis of the center of the universal enveloping algebra  $U(\mathfrak{gl}_N)$.
For more information  on quantum immanants see also \cite[Sect. 4.9]{Mnew}.


\section*{Acknowledgement} 
L. B. is member of Gruppo Nazionale per le Strutture Algebriche, Geometriche e le loro Applicazioni  (GNSAGA) of the Istituto Nazionale di Alta Matematica (INdAM) and part of the project MMNLP (Mathematical Methods in Non Linear Physics) of INFN.
L.B. was partially supported by the project Representation Theory and Applications, Bando Ateneo 2023 of Sapienza University or Rome.
S. K. is partially supported by the Croatian Science
Foundation under the project IP-2025-02-4720  and by the project ``Implementation of cutting-edge research and its application as part of the Scientific Center of Excellence for Quantum and Complex Systems, and Representations of Lie Algebras'', Grant No. PK.1.1.10.0004, co-financed by the European Union through the European Regional Development Fund - Competitiveness and Cohesion Programme 2021--2027.
This research was supported by the European Union -- NextGenerationEU through the National Recovery and Resilience Plan 2021-2026 Institutional grant of University of Zagreb Faculty of Science (IK IA 1.1.3. Impact4Math).


\end{document}